\documentclass[preprint,12pt]{elsarticle}

\usepackage{enumerate}
\usepackage{amssymb}
\usepackage{lipsum}
\usepackage[a4paper, total={6.9in, 9.4in}]{geometry}
\usepackage{booktabs}
\usepackage{amsmath,amssymb,url}
\usepackage{enumitem} 
\usepackage{graphics} 
\usepackage{array}
\usepackage{dsfont}
\usepackage[all]{xy}
\usepackage{amsthm}
\usepackage{tikz}
\usepackage{bbding}
\usepackage{graphicx}
\usepackage{amsthm}
\usepackage{mathrsfs}

\numberwithin{equation}{section}
\newtheorem{theorem}{Theorem}[section]
\newtheorem{lemma}[theorem]{Lemma}

\newcommand{\dn}{\mathord{\downarrow}\hspace{0.05em}}
\newcommand{\up}{\mathord{\uparrow}\hspace{0.05em}}
\newcommand{\uuar}{\mathord{\Downarrow}\hspace{0.05em}}
\newcommand{\dda}{\mathord{\Uparrow}\hspace{0.05em}}
\newcommand\blfootnote[1]{%
\begingroup
\renewcommand\thefootnote{}\footnote{#1}%
\addtocounter{footnote}{-1}%
\endgroup
}

\journal{Fundamenta Mathematicae}

\begin{document}

\begin{frontmatter}



\title{The sobriety for locally compact dcpos}


\author{Zhengmao He$^\star$, Yujing Zhang$^*$}
\address{$^\star$School of Sciences, Southwest Petroleum University, Chengdu 610500, Sichuan, China}
\address{$^*$School of Sciences, Hebei University of Science and Technology, Shijiazhuang 05118, Hebei, China}

\begin{abstract} In this paper, we focus on an open problem whether every locally compact dcpo is sober for the Scott topology. Specifically, we obtain two sufficient conditions for answering this problem. The main results are

(1) every meet continuous locally compact coherent dcpo is sober with respect to the Scott topology;

(2) the Scott space of every locally compact second countable dcpo is a sober space.
\end{abstract}
\begin{keyword}sober space; Scott topology; locally compact dcpo; second countable; meet continuous \\
\vspace*{0.2cm}
{\em Mathematics Subject Classification:} 54A05; 54E50; 06B30; 06F30; 03E15
\end{keyword}


\end{frontmatter}
\blfootnote{$^\star$ Corresponding author: Zhengmao He}
\blfootnote{This work is supported by the National Natural Science Foundation of China (Grant nos.12501636, 12471438, 12601906) and the Sichuan Science and Technology Program (Grant No. 2026NSFSC0784).}
\blfootnote{E-mail address: hezhengmaomath@163.com (Z.M.He);zhangyujing2024@163.com.}


\section{Introduction}
\label{}

Domain theory was introduced by Dana Scott as a mathematical framework for solving the semantic problems in theoretical computer science. Domain theory has connections with several parts of mathematics, including continuous lattice theory, order topology, locale theory and pointless topology, measure theory, and the semantics of nonclassical logics. Within the study of ordered and topological structures, the Scott topology is one of the order topologies induced by the order on a poset and it plays an important role in domain theory. Sober spaces is an important class of $T_{0}$ spaces and it plays an important role in Stone duality(\cite{MS36}), the Hofmann-Mislove Theorem(\cite{GG03}), and representation of closed set lattices(\cite{IWJ81}). A center problem in domain theory is to characterize when a dcpo equipped with the Scott topology is sober. A classical result is that every continuous dcpo or even every quasicontinuous dcpo is sober endowed with the Scott topology(\cite{GG03}). In recent years, there are two distinct strategies for solving this problem. The first approach proceeds constructively, by establishing sufficient conditions under which a $T_{0}$ space is sober. Lawson, Wu and Xi established that every well-filtered core compact $T_{0}$ space is sober(\cite{AT45}). Later, Xu, Sheng, Xi and Zhao proved that every well-filtered first-countable space is sober(\cite{TH62}). More recently, He generalized the latter result by showing that a well-filtered coherent space $X$ is sober provided that $X\times X$ is a Fr\'{e}chet-Urysohn space, or even merely a sequential space(\cite{RBN61,RFRY6}). For the Scott topology on dcpos, Xu and Ji proved that every countable meet-continuous complete lattice endowed with the Scott topology is sober(\cite{EFUP62}). He, Dong and Wang established that every countably core compact dcpo is sober with respect to the Scott topology(\cite{RTU66}).The second approach proceeds by constructing dcpos that satisfy further order conditions but fail to be Scott sober. Miao, Xi, Li and Zhao constructed a countable distributive complete lattice whose Scott topology is not sober(\cite{AB75}). Recently, Sheng, Xi and Zhao showed that the complete Boolean algebra formed by all regular open subsets of the real line fails to be sober in its Scott topology (\cite{EEF32})and He, Dong and Wang proved that the lattice of open sets of the rational number space is not Scott sober(\cite{RTU66}). These results provide strong support for elucidating the intrinsic principles ensuring sobriety of the Scott topology on dcpos.

It follows from the result that every core compact well-filtered (or equivalently, locally compact well-filtered) space is sober(\cite{AT45}). This indicates that a $T_{0}$ space with core compactness, or even with locally compactness is conducive to the sobriety of that space. For a general $T_{0}$ space, core-compactness or local compactness alone does not suffice to guarantee sobriety. For instance, the co-finite topology on an infinite countable set is a locally compact but non-sober space. It is therefore natural to ask whether anything essential changes when the hypothesis is strengthened from that of a locally compact $T_{0}$ space to that of a dcpo whose Scott topology is locally compact. Thus Xu and Zhao raised the following question (Question 3.14 in \cite{U17})):

{\bf Is every locally compact dcpo sober with respect to its Scott topology?}\\
This is the problem that we consider in the present paper. Note that every locally compact space is core-compact. It follows from the result of He, Dong and Wang (Theorem 3.3 in \cite{RTU66}) that every countable locally compact dcpo is sober in its Scott topology. In this paper, we consider Question 3.14 without the restriction of countability. Concretely, by analysing in depth the proof methods of the established results and by using tools from the descriptive set theory, we prove that a locally compact dcpo $P$ is sober with respect to its Scott topology provided that $P$ is a meet-continuous coherent dcpo or that $\Sigma P$ is second-countable.

\section{Preliminaries}
\label{}

\subsection{Order and topology}

\quad Let $P$ be a poset. For each $A\subseteq P$, set $\up A=\{x\in P\mid \exists\ a\in A, a\leq x \}$ and $\dn A=\{x\in P\mid \exists\ a\in A, x\leq a\}$. We write $\dn\{x\}$ as $\dn x$ and $\up\{x\}$ as $\up x$ for each $x\in P$. A subset $D$ of $P$ is {\em directed} provided that $D$ is nonempty and $\forall\ a,b\in D$, $\up a\cap \up b \cap D\neq\emptyset$. Dually, a nonempty subset $D\subseteq P$ is {\em filtered} if $\forall\ a,b\in D$, $\dn a\cap \dn b \cap D\neq\emptyset$. An element $x$ is an {\em upper bound} for a subset $A$ if $a\leq x$ for each $a\in A$. We say that $x$ is a {\em minimal(maximal)} element of $A$ if $x\in A$ and $\dn x\cap A=\{x\}$ ($\up x\cap A=\{x\}$). The notation ${\rm Min(A)}$(${\rm Max(A)}$) denotes the set of all minimal(maximal) elements of $A$. A subset $B\subseteq A$ is {\em cofinal} in $A$ if for every $a\in A$, there is a $b\in B\cap\up a$. For every directed subset $D$ and $d\in D$, $D\cap\up d$ is always cofinal in $D$. A poset $P$ is called a {\em directed complete poset} ({\em dcpo}, for short) if every directed subset of $P$ has a supremum.

\quad Let $P$ be a poset. For each pair of $x, y\in P$, we say that $x$ is {\em way below} $y$, in symbols $x\ll y$, if for each directed set $D\subseteq P$ whenever $\bigvee D$ exists, $y\leq\bigvee D$ implies $\up x\cap D\neq\emptyset$. A poset $P$ is {\em continuous} if $\uuar p=\{b\in P\mid b\ll p\}$ is directed and $\bigvee \uuar p=p$ for every $p\in P$. A subset $U\subseteq P$ is {\em Scott open} if $U=\up U$ and for each directed subset $D$ with $\bigvee D$ existing, $\bigvee D\in U$ implies $D\cap U\neq \emptyset$. The set of Scott open sets of $P$ forms the Scott topology $\sigma(P)$. We write $\Sigma P$ for $(P,\sigma(P))$. The collection of all Scott closed sets of $P$ is simply denoted by $\Gamma P$. For a continuous dcpo $P$, the collection $\{\dda x\mid x\in P\}$ is a basis for the Scott topology $\sigma(P)$. The lower topology $\omega(P)$ on $P$ is the topology generated by $\{P\setminus\up x\mid x\in P\}$. The Lawson topology $\lambda(P)$ is generated by $\sigma(P)\cup\omega(P)$. We write $(P,\lambda(P))$ as $\Lambda P$.

\quad Let $X$ be a topological space. We write \(\operatorname{cl}(A)\) and \(\operatorname{int}(A)\) for the closure and interior of \(A\), respectively. A subset $A$ of $X$ is called a {\em $G_{\delta}$ set} if $A$ is just the intersection of countably many open sets. A subset $F$ of $X$ is {\em irreducible} if $F$ is nonempty and for every pair of closed sets $A, B\subseteq X$, $F\subseteq A\cup B$ implies $F\subseteq A$ or $F\subseteq B$. A $T_{0}$ space $X$ is {\em sober} if every irreducible closed set is just the closure of a singleton. For a $T_{0}$ space $X$, the {\em specialization order} $\leq$ on $X$ is obtained by $x\leq y $ if and only if $x\in cl(\{y\})$. Unless otherwise stated, throughout
the paper, whenever an order-theoretic concept is mentioned in the context of a $T_{0}$ space $X$, it is to be interpreted with respect to the specialization order on $X$. A subset $A$ of a topological space $X$ is {\em saturated} if $A=\up A$ under the specialization order. We use $K(X)$ to denote the set of all compact saturated sets of $X$. A topological space $X$ is called {\em locally compact} if for each open set $U$ and for each $x\in U$, there exist an open set $V$ and a $K\in K(X)$ such that $x\in V\subseteq K\subseteq U$. A topological space $X$ is said to be {\em core compact } if  $\mathcal{O}(X)$ is continuous. We say that a topological space $X$ is {\em coherent} if the intersection of each pair of compact saturated sets is still compact.

\subsection{Descriptive Set Theory}

A topological space $X$ is called a {\em polish space} if $X$ is a separable completely metrizable space. A {\em $\sigma$-algebra} on $X$ is a collection of subsets of $X$ which contains $\emptyset$ and is closed under complements and countably unions. A {\em measurable space} is a pair $(X,\mathbf{S})$, where $\mathbf{S}$ is a $\sigma$-algebra on $X$. A measurable space $(X,\mathbf{S})$ is called a {\em standard Borel space} if there is a polish topology $\tau$ on $X$ such that $\mathbf{S}=\mathbf{B}(\tau)$, where $\mathbf{B}(\tau)$ is the smallest $\sigma$-algebra containing $\tau$. A set $A$ of a polish space $X$ is called analytic if there exists a polish space $Y$ and a continuous mapping $f:Y\longrightarrow X$ with $A=f(Y)$.

\quad Let $\prec$ be a relation on a set $X$. We say the relation $\prec$ is well-founded if there is no sequence $x_{0},x_{1},\cdot\cdot\cdot,x_{n},x_{n+1},\cdot\cdot\cdot$ with $\cdot\cdot\cdot\prec x_{n+1}\prec x_{n}\prec\cdot\cdot\cdot\prec x_{0}\prec x_{1}$. For each $x\in X$, the rank $\rho_{\prec}(x)$ at $x$ is defined by $$\rho_{\prec}(x)={\rm sup}\{\rho_{\prec}(z)+1\mid z\prec x\}\text{ and } {\rm sup }\emptyset=0.$$ We use $\omega_{1}$ to denote the first uncountable ordinal.

\section{Every locally compact coherent meet continuous dcpo is sober}
In this section, we prove that a meet continuous dcpo endowed with the Scott topology is sober if its Scott topology is locally compact and coherent.

\begin{lemma} {\rm (see \cite{GG03}) Let $P$ be a dcpo. Then $P$ is meet continuous if and only if for each $U\in\sigma(P)$ and $x\in U$, $\up(U\cap\dn x)\in\sigma(P)$.}
\end{lemma}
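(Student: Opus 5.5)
Since the paper never defines meet continuity, I will fix the standard definition from \cite{GG03}: a dcpo $P$ is \emph{meet continuous} if for every $x\in P$ and every directed set $D$ with $x\leq\bigvee D$, we have $x\in\operatorname{cl}_{\sigma}(\dn x\cap\dn D)$, where the closure is taken in $\Sigma P$. The plan is to prove the two implications separately. In each direction I will manipulate Scott open and Scott closed sets directly, using only that a Scott closed set is a lower set closed under directed suprema.

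\textbf{($\Rightarrow$)} Suppose $P$ is meet continuous, and let $U\in\sigma(P)$ and $x\in U$. Set $V=\up(U\cap\dn x)$. This is an upper set by construction, so it remains to show it is inaccessible by directed suprema. Take a directed $D$ with $\bigvee D\in V$. Then there is some $u\in U$ with $u\leq x$ and $u\leq\bigvee D$. Meet continuity applied to $u$ and $D$ gives $u\in\operatorname{cl}_{\sigma}(\dn u\cap\dn D)$. The set $U$ is an open neighbourhood of $u$, so it meets $\dn u\cap\dn D$. Hence there are $d\in D$ and $y\in U$ with $y\leq u\leq x$ and $y\leq d$. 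Then $y\in U\cap\dn x$, so $d\in V$, and therefore $D\cap V\neq\emptyset$.

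\textbf{($\Leftarrow$)} Suppose the stated condition holds, and let $x\leq\bigvee D$ with $D$ directed. Put $C=\operatorname{cl}_{\sigma}(\dn x\cap\dn D)$ and suppose, for contradiction, that $x\notin C$. Then $U=P\setminus C$ is a Scott open set containing $x$. By hypothesis $W=\up(U\cap\dn x)$ is Scott open, and $W$ contains $x$, hence contains $\bigvee D$. Scott openness of $W$ gives some $d\in D\cap W$, so there is $y\in U$ with $y\leq x$ and $y\leq d$. But then $y\in\dn x\cap\dn D\subseteq C$, which contradicts $y\in U=P\setminus C$. Therefore $x\in C$, which is exactly meet continuity.

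No step is a genuine obstacle. The only point needing care is the choice of definition, since the stated equivalence is essentially a reformulation of it. If one instead starts from the definition ``$\dn x\cap\dn D$ is Scott dense below $x\wedge$'', the same two arguments apply verbatim.
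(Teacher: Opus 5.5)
Your proof is correct. With the \cite{GG03} definition of meet continuity, both directions are the direct unfolding you give, and you handle the two delicate points properly: in $(\Rightarrow)$ you apply meet continuity to the witness $u\in U\cap\dn x$ rather than to $x$, and in $(\Leftarrow)$ you invoke the hypothesis only for $x\in U$, as the restricted statement requires.

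The paper gives no proof of its own for this lemma and simply cites \cite{GG03}; your argument is the standard one behind that citation. Your closing remark about an alternative definition is garbled ("below $x\wedge$"), but nothing depends on it.
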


\begin{theorem}{\rm  Let $P$ be a meet-continuous dcpo. If $\Sigma P$ is locally compact and coherent, then $\Sigma P$ is sober.}
\end{theorem}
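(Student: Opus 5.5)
The plan is to reduce sobriety to a well-filteredness-type statement relative to a fixed irreducible closed set, and then to establish that statement using the hypotheses. Let $A\subseteq P$ be an irreducible Scott closed set. The goal is a point $x\in A$ such that every Scott open $U$ with $U\cap A\neq\emptyset$ contains $x$. Such an $x$ suffices: every open set meeting $A$ meets $\{x\}$, so $A\subseteq \operatorname{cl}(\{x\})=\dn x$, and $x\in A$ gives $A=\dn x$.

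\emph{Building a filtered family.} For each $U\in\sigma(P)$ with $U\cap A\neq\emptyset$, pick $a_U\in U\cap A$. By local compactness choose an open $V_U$ and $K_U\in K(\Sigma P)$ with $a_U\in V_U\subseteq K_U\subseteq U$. Let $\mathcal{F}$ be the family of all finite intersections $K_{U_1}\cap\dots\cap K_{U_n}$.
\begin{itemize}
\item By coherence, each member of $\mathcal{F}$ is compact saturated, and $\mathcal{F}$ is filtered.
\item By irreducibility of $A$, the open set $V_{U_1}\cap\dots\cap V_{U_n}$ meets $A$, since each $V_{U_i}$ does. Hence every member of $\mathcal{F}$ meets $A$.
\item Any $x\in A\cap\bigcap\mathcal{F}$ lies in every such $U$, because $K_U\subseteq U$.
\end{itemize}
So everything reduces to: \emph{if a filtered family of compact saturated sets all meet the Scott closed set $A$, then $\bigcap\mathcal{F}$ meets $A$.} This is exactly well-filteredness of $\Sigma P$ (take the open set $P\setminus A$). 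By the Lawson--Wu--Xi result quoted in the introduction, well-filteredness together with local compactness is already equivalent to sobriety. So the real content is to prove well-filteredness of $\Sigma P$ from meet continuity and coherence, at least for families of the special form built above.

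\emph{Proving the well-filteredness step.} Suppose, for contradiction, that $\bigcap\mathcal{F}\subseteq P\setminus A$ while every member of $\mathcal{F}$ meets $A$.
\begin{itemize}
\item First, apply the topological Rudin lemma (Heckmann--Keimel) to the filtered family $\{K\cap A: K\in\mathcal{F}\}$. This yields an irreducible closed set $B\subseteq A$ that meets every $K\in\mathcal{F}$ and is minimal with this property.
\item Next, use minimality together with the Lemma (meet continuity): for $x\in B$ and $U\in\sigma(P)$ with $x\in U$, the set $\up(U\cap\dn x)$ is Scott open. I would exploit this by showing that for any $x\in B$ and $K\in\mathcal{F}$, the set $B\cap\dn x$ still meets all members of $\mathcal{F}$ cofinal below $K$. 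Minimality then forces $B=\dn x$ for suitable $x$, or produces a directed subset of $B$ whose supremum lies in $\bigcap\mathcal{F}$.
\item The compactness of each $K\in\mathcal{F}$ (a Scott compact upper set) is what lets a directed set in $B$ with elements entering every $K$ have its supremum in every $K$. Since $B\subseteq A$ is Scott closed, that supremum lies in $A$, contradicting $\bigcap\mathcal{F}\cap A=\emptyset$.
\end{itemize}

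\emph{Main obstacle.} The hardest step is this last one: producing a \emph{directed} subset of $B$ from the irreducibility and minimality of $B$. Irreducibility alone gives only "any two opens meeting $B$ meet each other inside $B$." Meet continuity is exactly the tool that converts this into order-theoretic directedness. If $x,y\in B$ lie in opens $U,W$, then $\up(U\cap\dn x)$ and $\up(W\cap\dn y)$ are open and meet $B$. Irreducibility then gives a point of $B$ above elements of $U\cap\dn x$ and $W\cap\dn y$. Combining this with the local-compactness neighbourhoods $V_U\subseteq K_U$, and iterating along the filtered family, I would try first to build the required directed set and control where its supremum lands. Coherence is needed throughout to keep the intersections compact so that the family stays inside $K(\Sigma P)$.
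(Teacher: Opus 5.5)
\textbf{There is a genuine gap: your reduction restates the theorem, and the step that would prove it is never supplied.}

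Your opening reduction is correct: if some $x$ lies in $A\cap\bigcap\mathcal F$, then $x$ lies in every Scott open set meeting $A$, so $A=\dn x$. But it carries no content. Sober spaces are well-filtered, and by Lawson--Wu--Xi locally compact well-filtered spaces are sober. So the well-filteredness statement you reduce to is equivalent to the theorem itself, and everything rests on your second part, where the argument is not actually given.
\begin{itemize}
\item The topological Rudin lemma only hands you another irreducible closed set $B\subseteq A$, whose generic point you still have to find.
\item The assertion that $B\cap\dn x=\dn x$ keeps meeting the members of $\mathcal F$ below $K$ is unsupported. If it held for all members of $\mathcal F$, minimality would immediately give $B=\dn x$, so it is the conclusion in disguise.
\item The alternative ``or produces a directed subset'' is never derived, and you concede this is the open step.
\item The property you credit to compactness (a directed set meeting every upper set $K$ has its supremum in every $K$) is trivial. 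Compactness and coherence are needed elsewhere.
\end{itemize}

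\textbf{The obstruction, and how the paper gets around it.} $\mathcal F$ is indexed by all Scott open sets meeting $A$, typically uncountably many. The meet-continuity move you describe can only be iterated in a controlled way along $\mathbb N$. The paper never proves full well-filteredness.
\begin{itemize}
\item \emph{Countable version (Claim 2).} If Scott open sets $U_n$ all meet $A$, then $A\cap\bigcap_n U_n\neq\emptyset$. The paper builds a triangular array $c_{j,n}\in A\cap V_j$ with $c_{j,n}\le c_{j+1,n}$ and $c_{j,n+1}\le c_{j,n}$. Each new row comes from a \emph{finite} backward descent through the open sets $\up(V_j\cap W_{j-1}\cap\dn c_{j,n})$.
\item \emph{Where coherence enters (Claim 1).} Each column is a decreasing sequence in $K_j$. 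Compactness of $\up a\cap\up b$ makes the lower bounds of such a sequence directed. So its infimum $d_j$ exists and stays in $K_j$, and $\sup_j d_j$ is the required point.
\item \emph{Points with $G_\delta$ principal filters (Claim 3).} The backward descent has no analogue at limit stages, so the general case uses a different device. A Zorn-minimality argument is run only against a countable decreasing sequence $Q_n$ of compact saturated sets and combined with Claim 2. This is where minimality actually pays off, forcing $B=\dn b$. It produces points $b\in A$ with $\up b$ a Scott $G_\delta$ set.
\item \emph{Directedness and the generic point (Claim 4).} Such points are dense in $A$. Any two of them, $p$ and $q$, have an upper bound of the same kind, obtained by applying Claim 3 to the interleaved sequences of open sets whose intersections are $\up p$ and $\up q$. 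The supremum of this directed set is the generic point of $A$.
\end{itemize}
This passage from countable intersections to arbitrary ones via $G_\delta$ principal filters is the idea your plan is missing.
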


\begin{proof} We  proof the Theorem 3.2 by establishing the following four claims.

$\mathbf{Claim\ 1:}$ Let $K\subseteq P$ be a compact upper set.
If $$\cdot\cdot\cdot\leq x_{n+1}\leq x_{n}\leq\cdot\cdot\cdot\leq x_3\leq x_2\leq x_1$$
is a decreasing sequence in $K$, then $\bigwedge\limits_{1\leq n}x_{n}\in K$.

Let $L=\bigcap\limits_{1\leq n}\dn x_n$. Thus $L$ is exactly the set of all lower bounds of the sequence.

For each $n\geq1$, $K\cap\dn x_n$ is a nonempty closed subset
in the compact subspace $K$.
By the finite intersection property of $K$, we obtain
$$K\cap L=\bigcap_{1\leq n}(K\cap\dn x_n)\neq\emptyset.$$
In particular, $L$ is nonempty. We next show that $L$ is directed.

Let $a,b\in L$. By coherence of $\Sigma P$, $Q=\up a\cap\up b$
is compact.
For every $1\leq n$, $a,b\leq x_n$ and then $x_n\in Q\cap\dn x_n$.
Thus the sets $Q\cap\dn x_n$ are nonempty closed
subsets of the compact space $Q$, and they form a decreasing
sequence. Similarly,
it follows that
$$
Q\cap L
=
\bigcap_{1\leq n}(Q\cap\dn x_n)
\neq\emptyset.
$$
Choose $c\in Q\cap L$. Then $a,b\leq c$ and $c\in L$. This proves that $L$ is directed.

Since $P$ is a dcpo, the element
$s=\bigvee L$ exists. Furthermore, we can easily deduce that $s=\bigwedge\limits_{1\leq n}x_n\in L$.

Take a $k\in K\cap L$. Then $k\leq s$ and hence $s\in K$ because $K$ is an upper set. This proves Claim 1.

$\mathbf{Claim \ 2:}$ Let $A\subseteq P$ be an irreducible Scott closed set
and  $\{U_n\}_{1\leq n}$ a sequence of Scott open sets
satisfying for each $1\leq n$, $A\cap U_n\neq\emptyset$.
Then $$A\cap(\bigcap\limits_{1\leq n}U_n)\neq\emptyset.$$

For each $1\leq m$, choose $a_{m}\in A\cap U_m$. As $\Sigma P$ is locally compact, there exist a
 Scott open set $V_m$ and a compact upper set $K_m$ such that $$
a_{m}\in A\cap V_m\ \text{and}\
V_m\subseteq K_m\subseteq U_m.
$$

In the following, we construct a triangular array
$c_{j,n}(1\leq j\leq n)$ with the following properties
$$c_{j,n}\in A\cap V_j,\ c_{j,n}\leq c_{j+1,n}\ \text{and}\ c_{j,n+1}\leq c_{j,n}.$$

Firstly, choose any $c_{1,1}\in A\cap V_1$. Let $$W_{1}=\up(V_{1}\cap\dn c_{1,1}).$$ By Lemma 3.1, $W_{1}\in\sigma(P)$. In addition, $c_{11}\in A\cap W_{1}$ and $A\cap V_{2}\neq\emptyset$. The irreducibility of $A$ gives $A\cap W_{1}\cap V_{2}\neq\emptyset$. Choose $c_{2,2}\in A\cap W_{1}\cap V_{2}$. It follows form $c_{2,2}\in W_{1}$ that there is $c_{1,2}\in V_{1}\cap\dn c_{1,1}$ satisfying $c_{1,2}\leq c_{2,2}$. Since $A$ is lower and $c_{1,2}\leq c_{1,1}$, we have $c_{1,2}\in A$ and hence $c_{1,2}\in A\cap V_{1}$. So we have
$$c_{j,2}\in A\cap V_j,j=1,2\ \text{ and }\ c_{1,2}\leq c_{2,2},c_{1,2}\leq c_{1,1}.$$

Suppose that the $n$-th ($2\leq n$) row has been constructed such that
$$c_{1,n}\leq c_{2,n}\leq\cdots\leq c_{n,n}$$ $$\ c_{j,n}\in A\cap V_{j} (1\leq j\leq n)$$ and $$ c_{j,k}\leq c_{j,k-1}(1\leq j\leq k-1,2\leq k\leq n).$$
Let $W_0=P$ and define
$$
W_j
=\up\bigl(
V_j\cap W_{j-1}\cap\dn c_{j,n}
\bigr),\ 1\leq j\leq n.
$$
As $W_{1}$ is open, by introduction and using the Lemma 3.1, each $W_{j}$ is Scott-open.

We assert that for each $1\leq j\leq n$, $c_{j,n}\in W_j$.

For $j=1$, this follows directly from $c_{1,n}\in V_1$
and $W_0=P$.
For $j>1$, suppose that $c_{j-1,n}\in W_{j-1}$.
Since $W_{j-1}$ is an upper set and
$c_{j-1,n}\leq c_{j,n}$, we obtain
$c_{j,n}\in W_{j-1}$.
Together with $c_{j,n}\in V_j$, this yields
$$
c_{j,n}\in
V_j\cap W_{j-1}\cap\downarrow c_{j,n}
\subseteq W_j.
$$
The assertion follows by induction.

In particular, $A\cap W_n\neq\emptyset$.
Since also $A\cap V_{n+1}\neq\emptyset$, irreducibility of
$A$ allows us to choose
$$
c_{n+1,n+1}\in A\cap V_{n+1}\cap W_n.
$$

For every $1\leq i\leq n$,
suppose that $c_{i+1,n+1}\in W_i$ has already been chosen.
By the definition of $W_i$, there exists
$$
c_{i,n+1}
\in
V_i\cap W_{i-1}
\cap\dn c_{i,n}
\cap\dn c_{i+1,n+1}.
$$
This choice ensures that
$$
c_{i,n+1}\leq c_{i,n}
\text{ and }
c_{i,n+1}\leq c_{i+1,n+1}.
$$
Moreover, $c_{i,n}\in A$ and $A$ is a lower set, so
$c_{i,n+1}\in A$.
The membership $c_{i,n+1}\in W_{i-1}$ makes the next step
of the reverse construction possible. So we obtain the elements $$c_{1,n+1},c_{2,n+1},\cdots, c_{n+1,n+1}$$ satisfying all requirements. This completes the induction and constructs the required
triangular array.

\

\begin{tikzpicture}[
    >=stealth,
    point/.style={inner sep=3pt},
    relation/.style={fill=white, inner sep=1pt},
    every node/.style={font=\small}
]
\node[anchor=east] at (-0.8, 0)    {$1:$};
\node[anchor=east] at (-0.8,-1.6)  {$2:$};
\node[anchor=east] at (-0.8,-3.2)  {$3:$};
\node[anchor=east] at (-0.8,-4.8)  {$\vdots$};
\node[anchor=east] at (-0.8,-6.4)  {$n:$};
\node[anchor=east] at (-0.8,-8.0)  {$n+1:$};
\node[point] (a11) at (0,0)    {$c_{1,1}$};
\node[point] (a12) at (0,-1.6) {$c_{1,2}$};
\node[point] (a22) at (2,-1.6) {$c_{2,2}$};
\node[point] (a13) at (0,-3.2) {$c_{1,3}$};
\node[point] (a23) at (2,-3.2) {$c_{2,3}$};
\node[point] (a33) at (4,-3.2) {$c_{3,3}$};
\node[point] (a1n) at (0,-6.4) {$c_{1,n}$};
\node[point] (a2n) at (2,-6.4) {$c_{2,n}$};
\node[point] (a3n) at (4,-6.4) {$c_{3,n}$};
\node[point] (adn) at (6,-6.4) {$\cdots$};
\node[point] (ann) at (8,-6.4) {$c_{n,n}$};
\node[point] (a1p) at (0,-8.0)  {$c_{1,n+1}$};
\node[point] (a2p) at (2,-8.0)  {$c_{2,n+1}$};
\node[point] (a3p) at (4,-8.0)  {$c_{3,n+1}$};
\node[point] (adp) at (6,-8.0)  {$\cdots$};
\node[point] (anp) at (8,-8.0)  {$c_{n,n+1}$};
\node[point] (app) at (10,-8.0) {$c_{n+1,n+1}$};
\foreach \a/\b in {
    a12/a22,
    a13/a23, a23/a33,
    a1n/a2n, a2n/a3n, a3n/adn, adn/ann,
    a1p/a2p, a2p/a3p, a3p/adp, adp/anp, anp/app}{
    \draw[->] (\a) -- node[relation,above] {$\le$} (\b);
}
\foreach \a/\b in {
    a11/a12,
    a12/a13, a22/a23,
    a1n/a1p, a2n/a2p, a3n/a3p, ann/anp}{
    \draw[->] (\a) -- node[relation,right] {$\ge$} (\b);
}
\foreach \a/\b in {a13/a1n, a23/a2n, a33/a3n}{
    \draw[->,dashed] (\a) -- node[relation,right] {$\ge$} (\b);
}
\foreach \x in {0,2,4}{
    \node[fill=white,inner sep=2pt] at (\x,-4.8) {$\vdots$};
}
\node at (6,-4.8) {$\ddots$};
\node[
    anchor=north,
    font=\normalsize,
    yshift=-2mm
] at (current bounding box.south) {\hspace{5cm}The triangular array};
\end{tikzpicture}

Fix $1\leq j$,
the $j$-th column is a decreasing sequence
$$
\cdots\leq c_{j,j+2}\leq c_{j,j+1}\leq c_{j,j}
$$
contained in $V_j\subseteq K_j$.
By Claim 1, the infimum
$d_j=\inf\limits_{j\leq n}c_{j,n}$ belongs to $K_j$.
Since $d_j\leq c_{j,j}\in A$ and $A$ is a lower set, we have $d_j\in A\cap K_j$.

For every $j+1\leq n$, we have $d_j\leq c_{j,n}\leq c_{j+1,n}$.
Hence $d_j$ is a lower bound of the set $\{c_{j+1,n}\mid j+1\leq n\}$.
By the definition of $d_{j+1}$,we get $d_j\leq d_{j+1}$.
Thus $\{d_j\mid 1\leq j\}$ is a chain. As $P$ is a dcpo, $$d=\sup_{1\leq j}d_j$$ exists.
As all $d_j$ belong to $A$, and $A$ is Scott closed, we obtain $d\in A$.
For each $1\leq j$, we have $d_j\in K_j$ and $d_j\leq d$.
As $K_j$ is an upper set, we know $d\in K_j\subseteq U_j$.
Therefore
$$
d\in A\cap\bigcap_{1\leq j}U_j.
$$
Hence,  Claim 2 is proved.

$\mathbf{Claim\ 3:}$ Let $A\subseteq P$ be an irreducible Scott closed set,
and $\{E_n\}_{1\leq n}$ a sequence of Scott open sets such that for each $1\leq n$, $A\cap E_{n}\neq\emptyset$.
There exists
$$
b\in A\cap\bigcap_{1\leq n}E_n
$$
such that $\up b$ is a $G_\delta$ set in $\Sigma P$.

Set $F_0=P$. Choose any $b_{1}\in A\cap E_{1}$. By the locally compactness, there exist a Scott open set $F_{1}$ and a compact upper set $Q_{1}$ such that $$b_{1}\in A\cap F_{1}\ \text{ and }F_{1}\subseteq Q_{1}\subseteq E_{1}\cap F_{0}.$$

Suppose $F_{n}\in\sigma(P)$ and $Q_{n}\in K(P)$ have been chosen such that
$$A\cap F_n\neq\emptyset\ \text{ and }F_n\subseteq Q_n\subseteq E_n\cap F_{n-1}.$$

As $A\cap E_{n+1}\neq\emptyset$ and $A\cap F_{n}\neq\emptyset$, together with the irreducibility of $A$, we have $A\cap E_{n+1}\cap F_{n}\neq\emptyset$. Take any $b_{n+1}\in A\cap E_{n+1}\cap F_{n}$. Using the locally compactness for the case $b_{n+1}\in E_{n+1}\cap F_{n}\in\sigma(P)$, there are $F_{n+1}\in\sigma(P)$ and $Q_{n+1}\in K(P)$ satisfying $$A\cap F_{n+1}\neq\emptyset\ \text{ and }F_{n+1}\subseteq Q_{n+1}\subseteq E_{n+1}\cap F_{n}.$$

By introduction, this produces the required all $F_n$ and $Q_n$ ($1\leq n$).

The construction implies for each $n$,
$$Q_{n+1}\subseteq F_n\subseteq Q_n.$$
In particular, the sequence $\{Q_n\}_{1\leq n}$ is decreasing.

Let $\mathscr C=\{C\in \Gamma P\mid C\subseteq A\ \text{ and for each}\ 1\leq n, C\cap Q_{n}\neq\emptyset\}$.

The family $\mathscr C$ is nonempty because $A\in\mathscr C$.

Let $\{C_i\mid i\in I\}$ be a nonempty chain in $\mathscr C$
under inclusion and put $\widetilde{C}=\bigcap\limits_{i\in I}C_i$.
The set $\widetilde{C}$ is Scott closed and contained in $A$.
Fix $1\leq n$, each $Q_n\cap C_i$
is a nonempty closed subset of the compact space $Q_n$.
As all $C_i$ form a chain, every finite subfamily of $\{Q_n\cap C_i\mid i\in I\}$ has
nonempty intersection. It follows form $Q_{n}\in K(P)$ that
$$
Q_n\cap \widetilde{C}
=
\bigcap_{i\in I}(Q_n\cap C_i)
\neq\emptyset.
$$
In other words, $\widetilde{C}$ belongs to $\mathscr C$. By Zorn's lemma,  $\mathscr C$ contains a minimal element $B$ under the inclusion order.

Next, we prove that $B$ is irreducible.

$B$ is nonempty because it meets the set $Q_1$. Let $O$ be a Scott open set meeting $B$. Then $B\setminus O$ is a proper Scott closed subset of $B$.
The minimality of $B$ implies that $B\setminus O\not\in\mathscr C$.
Clearly, $B\setminus O$ is closed and contained in $A$. As a consequence, there exists a $Q_{n}$ such that
$$
(B\setminus O)\cap Q_n=\emptyset.
$$
Equivalently, $B\cap Q_n\subseteq O$. Now suppose $O$ and $H$ are Scott open sets meeting $B$.
Choose $Q_{r}$ and $Q_{s}$ such that
$$
B\cap Q_r\subseteq O,
\ \text{ and }
B\cap Q_s\subseteq H.
$$
Take $N=\max\{r,s\}$. As the sequence $\{Q_n\}_{1\leq n}$ is decreasing, we obtain
$$
\emptyset\neq B\cap Q_N\subseteq O\cap H.
$$
Thus $O\cap H\cap B\neq\emptyset$, proving that $B$ is irreducible.

For every $n\geq1$, we have $Q_{n+1}\subseteq F_n\ \text{ and }\ B\cap Q_{n+1}\neq\emptyset$.
Hence every $F_n$ meets $B$.
By Claim 2, we can choose 
$$
b\in B\cap\bigcap_{n\geq1}F_n.
$$
In particular, $b\in Q_n\subseteq E_n$ for every $n$.

Since $B$ is a lower set containing $b$, we have
$\dn b\subseteq B$.
The set $\dn b$ is Scott closed and meets every $Q_n$. Thus $\dn b\in\mathscr C$.
Using the minimality of $B$ again, we have $B=\downarrow b$.

For each $1\leq n$, define
$$O_n=\up(F_n\cap\dn b).$$
By Lemma 3.1, each $O_n$ is Scott open.
It follows from $b\in F_n$ that $b\in O_n$.
As $O_n$ is an upper set, it yields that
$\up b\subseteq\bigcap_{n\geq1}O_n$.

Conversely, let $z\in\bigcap\limits_{1\leq n}O_n$.
Then every $O_n$ provides an element $u_n$
such that
$$u_n\in F_n,\ u_n\leq b\ \text{ and }\ u_n\leq z.$$
Consequently, the Scott closed set
$C_{z}=\dn b\cap\dn z$
meets every $Q_n$ because
$$u_n\in C_{z}\cap F_n\subseteq C_{z}\cap Q_n.$$
Moreover, $C_{z}\subseteq\dn b=B\subseteq A.$
Hence $C_{z}\in\mathscr C$ and the minimality of $B$ implies
$C_{z}=B=\dn b$.
In particular, $b\in\dn z$ or equivalently $b\leq z$.
We conclude that $\up b=\bigcap_{n\geq1}O_n.$
Thus $\up b$ is a Scott $G_\delta$ set. The proof of Claim 3 is complete.

$\mathbf{Claim\ 4:}$ $\Sigma P$ is sober.

Let $A\subseteq P$ be an arbitrary irreducible
Scott-closed set and define
$$
D_A=
\{b\in A\mid
  \up b\textnormal{ is a }G_\delta
  \textnormal{ set in}\ \Sigma P\}.
$$
We shall show that $D_A$ is a nonempty directed set and further Scott dense
in $A$.

By Claim 3 and take each $E_n=P$,
this gives an element of $D_A$, proving that $D_A\neq\emptyset$.

Next, let $G$ be any Scott open set meeting $A$.
Using  Claim 3 and taking each $E_n=G$, this gives a point $g\in A\cap G$ such that $\up g$ is
a $G_\delta$ set.
So we have $g\in D_A\cap G$ or alternatively $D_A\cap G\neq\emptyset$.
This proves $\overline{(D_{A})}_{\Sigma P}=A$.

Let $p,q\in D_A$. Then there are Scott open sets
$G_n$ and $H_n$ such that
$$
\up p=\bigcap_{1\leq n}G_n,\ \up q=\bigcap_{1\leq n}H_n.
$$
Clearly, every $G_n$ and $H_n$ meet $A$.
We apply Claim 3 to the interleaved sequence
$$
G_1,H_1,G_2,H_2,\cdot\cdot\cdot,G_n,H_n\cdot\cdot\cdot
$$
and hence there exists a $l\in A$ such that $\up l$ is a
Scott $G_\delta$ set and
$$
l\in
\left(\bigcap_{1\leq n}G_n\right)
\cap
\left(\bigcap_{1\leq n}H_n\right)
=
\up p\cap\up q.
$$
Equivalently, $l\in D_A$ and $ p, q\leq l$. Thus, $D_A$ is directed.

Since $P$ is a dcpo, the supremum
$a^{\ast}=\sup D_A$ exists.
The set $A$ is Scott closed and contains $D_A$. So $A$ contains $a^{\ast}$ and further
$\dn a^{\ast}\subseteq A$ as $A$ is lower.
Besides, $\dn a^{\ast}$ is a Scott closed set containing
$D_A$. Hence, $$A=\overline{(D_{A})}_{\Sigma P}\subseteq\dn a^{\ast}.$$ We have proved $A=\dn a^{\ast}=\overline{\{a^{\ast}\}}_{\Sigma P}$. Therefore $\Sigma P$ is sober.
\end{proof}

\section{Every second countable locally compact dcpo is sober}
In this section, we prove that for a dcpo $P$, if $\Sigma P$ is locally compact and second countable, then $\Sigma P$ is sober.

\begin{lemma} {\rm  Let $P$ be a dcpo and $V, U\in\sigma(P)$. If $V\ll U$ and $\{a_{n}\}_{n\in\mathbb{N}}$ is a sequence contained in $V$, then there exists $b\in U$ such that
$\bigl|\{n\in\mathbb N\mid b\leq a_n\}\bigr|=\infty$.
}
\end{lemma}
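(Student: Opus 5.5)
Here $V\ll U$ means way-below in the lattice $\sigma(P)$ of Scott open sets. I would argue by contradiction. Suppose that every $b\in U$ lies below only finitely many $a_n$. For each $m\in\mathbb N$ set
$$
W_m=P\setminus\dn\{a_n\mid n\geq m\}.
$$
The plan is to show that each $W_m$ is Scott open, that the $W_m$ increase, and that $\bigcup_m W_m\supseteq U$. Then $V\ll U$ gives a single $m$ with $V\subseteq W_m$. But $a_m\in V$ and $a_m\notin W_m$, which is a contradiction.

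\textbf{Covering.} The family $\{W_m\}$ is increasing, since the sets $\{a_n\mid n\geq m\}$ decrease. If $b\in U$, then by hypothesis $b\leq a_n$ holds for only finitely many $n$. So there is an $m$ with $b\not\leq a_n$ for all $n\geq m$, and hence $b\in W_m$. Thus $U\subseteq\bigcup_m W_m$. Because the union is directed and $V\ll U$, we get $V\subseteq W_m$ for some $m$.

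\textbf{Scott openness, the main obstacle.} A finite union of principal ideals is Scott closed, but an infinite one need not be. So $\dn\{a_n\mid n\geq m\}$ may fail to be Scott closed, and I would replace it by its Scott closure $C_m=\operatorname{cl}(\{a_n\mid n\geq m\})$. Put $W_m=P\setminus C_m$. These sets are Scott open and still increase, and $a_m\notin W_m$, so the contradiction at the end survives. What must now be checked is the covering: every $b\in U$ must avoid $C_m$ for some $m$.

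Suppose instead that $b\in C_m$ for every $m$. Then every Scott open neighbourhood of $b$ contains infinitely many $a_n$, so $b$ behaves like a cluster point of the sequence. Note that $b\in C_m$ does not by itself give $b\leq a_n$ for any $n$, because the Scott closure of a set can be larger than its lower set. So this case cannot be excluded pointwise, and I would not use the contradiction hypothesis for these points. Instead I would work directly with the set
$$
Z=\bigcap_m C_m,
$$
which is Scott closed. There are two cases.

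\emph{Case $U\cap Z=\emptyset$.} Then $U\subseteq\bigcup_m(P\setminus C_m)$, and the argument above gives the contradiction.

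\emph{Case $U\cap Z\neq\emptyset$.} Here I would extract a suitable point from $U\cap Z$. I would try to use the interpolation provided by core compactness: choose a Scott open $V'$ with $V\ll V'\ll U$. The aim is then to find, inside $Z\cap U$, a point lying below infinitely many $a_n$. A point of $Z$ is only a limit of the tails of the sequence, so this step is where I expect the real difficulty. I would attempt it by a compactness or Zorn argument over the tail closures $C_m$ meeting $U$, in the style of Claim 3 in the previous section.

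If this route stalls, I would fall back on the locally compact setting the paper uses: take a compact saturated $K$ with $V\subseteq K\subseteq U$, and apply the finite intersection property to the closed sets $K\cap C_m$ to obtain a point of $K\cap Z$. I would then need to upgrade this point to one lying below infinitely many $a_n$, and I am not yet sure how to make that upgrade.
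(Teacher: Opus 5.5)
Your proposal does not yet prove the lemma. Everything hinges on your second case, $U\cap Z\neq\emptyset$, and you leave it open. The fallbacks you sketch would not be legitimate anyway. Interpolation $V\ll V'\ll U$ in $\sigma(P)$ needs core compactness, and a compact saturated $K$ with $V\subseteq K\subseteq U$ needs local compactness. The lemma is stated for an arbitrary dcpo; core compactness only enters later, in Lemma 4.2, where this lemma is applied. Your diagnosis that Case~2 ``cannot be excluded pointwise'' is where the argument goes astray. Under the contradiction hypothesis this case is in fact vacuous. To see it, you must apply the hypothesis not to the would-be cluster point $b$, but to the elements of a directed set approaching it.

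The missing idea is to keep your original sets but restrict them to $U$, i.e.\ to use $U\setminus\dn\{a_n\mid n\geq m\}$. Equivalently, use the complement of
\[
C_m=(P\setminus U)\cup\dn\{a_n\mid n\geq m\},
\]
which is what the paper does. This $C_m$ is a lower set, and it is Scott closed. Let $D\subseteq C_m$ be directed with $\bigvee D\in U$ (the case $\bigvee D\notin U$ is trivial), and pick $d_0\in D\cap U$. Every $x\in D\cap\up d_0$ lies in $C_m\cap U\subseteq\dn\{a_n\mid n\geq m\}$, so $x\leq a_n$ for some $n\geq m$, and then $d_0\leq a_n$ too. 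Apply the contradiction hypothesis to $d_0\in U$: the set $I=\{n\geq m\mid d_0\leq a_n\}$ is finite. Hence the cofinal directed set $D\cap\up d_0$ lies in the finite union of principal ideals $\dn\{a_n\mid n\in I\}$, which is Scott closed and therefore contains $\bigvee D$.

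Your own covering argument now gives $\bigcap_m C_m=P\setminus U$. So $U=\bigcup_m(P\setminus C_m)$ is an increasing union of Scott open sets, and $V\ll U$ yields $V\subseteq P\setminus C_{m_0}$, contradicting $a_{m_0}\in V\cap C_{m_0}$. Note also that, under the hypothesis, $\operatorname{cl}(\{a_n\mid n\geq m\})\subseteq C_m$. This gives $Z\cap U\subseteq U\cap\bigcap_m C_m=\emptyset$, so your troublesome second case never arises.
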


\begin{proof}
Assume that such $b$ does not exist. For each $m\in\mathbb{N}$, put
$$
C_m=(P\setminus U)\cup\bigcup_{m\leq n}\downarrow a_n.
$$
Each $C_{m}$ is clearly a lower set. Let $D\subseteq C_m$ be a directed set and suppose $d=\bigvee D$. It is no doubt that $\bigvee D\in C_{m}$ whenever $\bigvee D\in P\setminus U$. It remains to consider the condition $d\in U$. As $U\in\sigma(P)$,
there is $d_0\in D\cap U$. This leads to $\up d_{0}\subseteq U$. Take
$$D_0=D\cap\up d_0\ \text{ and }I_m=\{m\leq n\mid d_0\leq a_n\}.$$
As $D_{0}\subseteq C_{m}\cap \up d_{0}\subseteq C_{m}\cap U$, we deduce that $D_{0}\subseteq\dn I_{m}$.
By assumption, $I_m$ is finite. Consequently, $\dn I_{m}$ is a Scott closed subset and hence
$$\bigvee D_0=\bigvee D=d\in\dn I_{m}\subseteq C_{m}.$$
Thus each $C_{m}$ is Scott closed.
By assumption, $\bigcap\limits_{m\in\mathbb{N}} C_m=P\setminus U$. In other words, $$U=\bigcup\limits_{m\in\mathbb{N}}(P\setminus C_m)=\bigvee\limits_{m\in\mathbb{N}}\limits^{\sigma(P)}(P\setminus C_{m}).$$ Since $V\ll U$, there exists $C_{m_{0}}$ such that $V\subseteq P\setminus C_{m_{0}}$. This contradicts with $a_{m_{0}}\in V\cap C_{m_{0}}$. The proof is complete.
\end{proof}

\begin{lemma} {\rm  Let $P$ be a dcpo such that $\Sigma P$ is core-compact. If $\{A_{n}\mid 1\leq n\}$ is a decreasing sequence consisting of lower sets, then $cl_{\Sigma P}(\bigcap\limits_{1\leq n}A_{n})=\bigcap\limits_{1\leq n}cl_{\Sigma P}(A_{n})$.
}
\end{lemma}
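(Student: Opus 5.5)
The inclusion $cl_{\Sigma P}(\bigcap_{1\leq n}A_{n})\subseteq\bigcap_{1\leq n}cl_{\Sigma P}(A_{n})$ holds because closure is monotone. So the plan is to prove the reverse inclusion pointwise: fix $x\in\bigcap_{1\leq n}cl_{\Sigma P}(A_n)$ and a Scott open set $U$ containing $x$, and show that $U\cap\bigcap_{1\leq n}A_n\neq\emptyset$. The tool is Lemma 4.1, applied to a sequence chosen from the sets $A_n$.

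First, use core-compactness. Since $\sigma(P)$ is a continuous lattice, $U=\bigcup\{V\in\sigma(P)\mid V\ll U\}$. Hence there is a Scott open $V$ with $x\in V\ll U$. Since $x\in cl_{\Sigma P}(A_n)$ and $V$ is an open neighbourhood of $x$, for every $n$ we can choose $a_n\in V\cap A_n$. This gives a sequence $\{a_n\}$ contained in $V$. By Lemma 4.1 there is a $b\in U$ such that the set $\{n\mid b\leq a_n\}$ is infinite.

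Next, show that $b\in\bigcap_{1\leq m}A_m$. Fix $m$. Because $\{n\mid b\leq a_n\}$ is infinite, there is some $n\geq m$ with $b\leq a_n$. The sequence is decreasing, so $a_n\in A_n\subseteq A_m$. Since $A_m$ is a lower set, $b\in A_m$. Therefore $b\in U\cap\bigcap_{1\leq m}A_m$. As $U$ was an arbitrary open neighbourhood of $x$, this gives $x\in cl_{\Sigma P}(\bigcap_{1\leq n}A_n)$.

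The main difficulty is extracting a single point lying in all the $A_n$ from points $a_n$ that each lie in only one of them. Lemma 4.1 does this: its ``infinitely many'' conclusion, together with the facts that the $A_n$ are decreasing and are lower sets, is exactly what is needed. The only other ingredient is the existence of $V\ll U$ containing $x$, which is where core-compactness of $\Sigma P$ is used.
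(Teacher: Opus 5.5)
Your proof is correct and follows the paper's argument essentially step for step. Core-compactness gives $x\in V\ll U$, you choose $a_n\in V\cap A_n$, apply Lemma 4.1 to get $b\in U$ lying below infinitely many $a_n$, and then use that the $A_n$ are decreasing lower sets to place $b$ in every $A_m$.
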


\begin{proof} Clearly, $cl_{\Sigma P}(\bigcap\limits_{1\leq n}A_{n})\subseteq\bigcap\limits_{1\leq n}cl_{\Sigma P}(A_{n})$. Conversely, choose any $x\in\bigcap\limits_{1\leq n}cl_{\Sigma P}(A_{n})$ and $U\in\sigma(P)$ with $x\in U$. As $\Sigma P$ is core-compact, there exists a Scott open set $V$ satisfying $x\in V$ and $V\ll U$. It follows from $x\in\bigcap\limits_{1\leq n}cl_{\Sigma P}(A_{n})$ that $V\cap A_{n}\neq\emptyset$ for each $1\leq n$. Fix a $a_{n}\in V\cap A_{n}$. By Lemma 4.1, there is $b\in U$ such that $\bigl|\{n\in\mathbb N\mid b\leq a_n\}\bigr|=\infty$. Consequently, for each $n$, there exists $n\leq k$ such that $b\leq a_{k}$. Whence, $a_{k}\in A_{k}\subseteq A_{n}$ and for each $n$, $b\in\dn A_{n}\cap U=A_{n}\cap U$. So we conclude that $b\in U\cap\bigcap\limits_{1\leq n}A_{n}$ and thus $x\in cl_{\Sigma P}(\bigcap\limits_{1\leq n}A_{n})$. Therefore, $cl_{\Sigma P}(\bigcap\limits_{1\leq n}A_{n})=\bigcap\limits_{1\leq n}cl_{\Sigma P}(A_{n})$.\end{proof}

\begin{lemma} {\rm (see \cite{R2226}) Let $X$ be a $T_{0}$ space and $K\in K(X)$. Then $K=\up{\rm Min}(K)$ and ${\rm Min}(K)$ is compact in $X$.
}
\end{lemma}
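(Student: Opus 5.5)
We need to prove Lemma 4.3: for a $T_0$ space $X$ and $K\in K(X)$, $K=\up{\rm Min}(K)$ and ${\rm Min}(K)$ is compact.

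\textbf{Step 1: every element of $K$ lies above a minimal element.} Fix $x\in K$ and consider the family of chains in $K\cap\dn x$, ordered by inclusion. We apply Zorn's lemma in its dual form: it suffices to show that every chain $C\subseteq K\cap\dn x$ has a lower bound in $K\cap\dn x$. For each $c\in C$, the set $\dn c=\operatorname{cl}(\{c\})$ is closed, so $K\cap\dn c$ is a closed subset of the compact space $K$. Because $C$ is a chain, the family $\{K\cap\dn c\mid c\in C\}$ has the finite intersection property: finitely many of these sets contain the one indexed by the least element among them. Compactness of $K$ therefore gives a point $m\in\bigcap_{c\in C}(K\cap\dn c)$. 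This $m$ is a lower bound of $C$ lying in $K\cap\dn x$. Zorn's lemma then yields a minimal element $m_0$ of $K\cap\dn x$. Any $y\in K$ with $y\leq m_0$ also satisfies $y\le x$, so $y\in K\cap\dn x$ and hence $y=m_0$. Thus $m_0\in{\rm Min}(K)$ and $x\in\up{\rm Min}(K)$. The reverse inclusion $\up{\rm Min}(K)\subseteq K$ holds because $K$ is saturated. So $K=\up{\rm Min}(K)$.

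\textbf{Step 2: ${\rm Min}(K)$ is compact.} Let $\mathcal U$ be an open cover of ${\rm Min}(K)$. Open sets are upper sets in the specialization order, so $\bigcup\mathcal U$ is an upper set containing ${\rm Min}(K)$. By Step 1 it therefore contains $\up{\rm Min}(K)=K$, so $\mathcal U$ covers $K$. Compactness of $K$ gives a finite subcover, and that subcover also covers ${\rm Min}(K)$.

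\textbf{Main obstacle.} The only nontrivial point is producing lower bounds of chains in Step 1. The key is to use closures of points, $\dn c=\operatorname{cl}(\{c\})$, together with compactness of $K$. Note that $T_0$ is only needed so that the specialization relation is a partial order, which makes ``minimal'' meaningful.
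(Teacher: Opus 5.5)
Your proof is correct. The paper gives no argument of its own here (it quotes the lemma from Ern\'{e}), and your route is the standard proof of this fact: Zorn's lemma on $K\cap\dn x$, with lower bounds of chains obtained from the finite intersection property of the closed sets $K\cap\dn c=K\cap\operatorname{cl}(\{c\})$ in the compact space $K$, followed by the observation that any open cover of ${\rm Min}(K)$ already covers $\up{\rm Min}(K)=K$. The only detail to tidy is the empty chain. Your intersection argument gives $m\leq x$ only when $C\neq\emptyset$, but for the empty chain $x$ itself is the required lower bound in $K\cap\dn x$.
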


\begin{lemma} {\rm (\cite{GG03})  Let $P$ be a continuous lattice. Then $\Lambda P$ is a compact Hausdorff space.
}
\end{lemma}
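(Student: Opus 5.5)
We verify the two properties separately: Hausdorffness by separating points with a basic Scott open set $\dda b$ and a subbasic lower open set, and compactness via the Alexander subbase lemma applied to the subbase $\sigma(P)\cup\{P\setminus\up x\mid x\in P\}$ of $\lambda(P)$. Continuity of $P$ enters through the approximation $\bigvee\uuar p=p$ and the fact, recalled in the preliminaries, that each $\dda b$ is Scott open. Completeness of the lattice enters through the existence of arbitrary joins.

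\emph{Hausdorff.} Let $x\neq y$, and without loss of generality assume $x\not\leq y$. Since $x=\bigvee\uuar x$, some $b\ll x$ satisfies $b\not\leq y$. Then $\dda b$ is a Scott open (hence Lawson open) set containing $x$. Also, $P\setminus\up b$ is a lower-topology open set containing $y$. These two sets are disjoint, because $\dda b\subseteq\up b$. Hence $\Lambda P$ is Hausdorff.

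\emph{Compactness.} By Alexander's lemma it suffices to show that every cover of $P$ by subbasic sets has a finite subcover. Take a cover $\{U_i\mid i\in I\}\cup\{P\setminus\up x_j\mid j\in J\}$ with $U_i\in\sigma(P)$. Put $x=\bigvee_{j\in J}x_j$, which exists because $P$ is a complete lattice; if $J=\emptyset$ then $x$ is the bottom element. Since $x\geq x_j$ for all $j$, the point $x$ lies in no $P\setminus\up x_j$. Hence $x\in U_{i_0}$ for some $i_0$. Because $U_{i_0}$ is Scott open and $x=\bigvee\uuar x$ with $\uuar x$ directed, there is $b\ll x$ with $b\in U_{i_0}$, and so $\up b\subseteq U_{i_0}$.

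Now $x$ is the supremum of the directed family of finite joins $x_{j_1}\vee\cdots\vee x_{j_n}$, including the empty join. Since $b\ll x$, we get $b\leq x_{j_1}\vee\cdots\vee x_{j_n}$ for some finite $\{j_1,\dots,j_n\}\subseteq J$. Therefore
$$\bigcap_{k=1}^{n}\up x_{j_k}=\up(x_{j_1}\vee\cdots\vee x_{j_n})\subseteq\up b\subseteq U_{i_0}.$$
It follows that $U_{i_0}$ together with $P\setminus\up x_{j_1},\dots,P\setminus\up x_{j_n}$ already covers $P$. This is the required finite subcover, so $\Lambda P$ is compact.

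The only delicate step is the compactness argument. Using the way-below relation against the directed family of finite joins is what turns an arbitrary family of subbasic lower sets into finitely many; the empty-join case, where $x$ is the bottom element, is covered by the same argument.
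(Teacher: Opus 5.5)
The paper gives no proof of its own here; it simply cites \cite{GG03}. Your argument is correct and is essentially the standard one from that source: you separate $x\not\leq y$ by $\dda b$ and $P\setminus\up b$ with $b\ll x$, $b\not\leq y$, and you obtain compactness from Alexander's lemma applied at $x=\bigvee_{j}x_j$.

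One remark: the detour through $b\ll x$ in the compactness half is unnecessary. Since $U_{i_0}$ is Scott open and $x$ is the directed join of the finite joins of the $x_j$, some finite join $x_{j_1}\vee\cdots\vee x_{j_n}$ already lies in $U_{i_0}$. So compactness holds for every complete lattice, and continuity is needed only for the Hausdorff property.
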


\begin{lemma} {\rm (see \cite{RE77})A compact Hausdorff space is metrizable if and only if it is second countable.}
\end{lemma}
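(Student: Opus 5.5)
Since this is the classical metrization criterion quoted from \cite{RE77}, I would prove the two implications separately, using only standard point-set topology.

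For the forward direction, let $X$ be a compact metrizable space with a compatible metric $d$. For each $k\geq 1$, compactness lets me cover $X$ by finitely many open balls of radius $1/k$. The centres of all these balls form a countable set $S$. I claim that the family $\{B(s,1/k)\mid s\in S,\ k\geq 1\}$ is a countable base. Let $x\in U$ with $U$ open, and pick $k$ with $B(x,2/k)\subseteq U$. Some centre $s$ from the $k$-th cover satisfies $d(x,s)<1/k$. Then $x\in B(s,1/k)$, and by the triangle inequality $B(s,1/k)\subseteq B(x,2/k)\subseteq U$. So $X$ is second countable. Hausdorffness is not even needed here.

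For the converse, let $X$ be compact Hausdorff with a countable base $\mathcal B=\{B_n\}$. The plan is to embed $X$ into the Hilbert cube $[0,1]^{\mathbb N}$, which is metrizable.
\begin{enumerate}
\item I would first note that $X$ is normal. A compact Hausdorff space is regular, since a point and a disjoint compact set can be separated by a finite union argument. Repeating the same argument with two disjoint closed (hence compact) sets gives normality.
\item Next consider the countably many pairs $(m,n)$ with $\operatorname{cl}(B_m)\subseteq B_n$. For each such pair, Urysohn's lemma gives a continuous $f_{m,n}\colon X\to[0,1]$ that equals $1$ on $\operatorname{cl}(B_m)$ and $0$ off $B_n$.
\item Enumerate these functions and define $F\colon X\to[0,1]^{\mathbb N}$ by $F(x)=(f_{m,n}(x))$. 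The map $F$ is continuous because each coordinate is.
\item $F$ is injective. Given $x\neq y$, Hausdorffness gives a basic $B_n$ with $x\in B_n$ and $y\notin B_n$. Regularity then gives a basic $B_m$ with $x\in B_m$ and $\operatorname{cl}(B_m)\subseteq B_n$. Now $f_{m,n}(x)=1$ while $f_{m,n}(y)=0$.
\end{enumerate}

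Finally, $F$ is a continuous injection from a compact space into a Hausdorff space. Such a map sends closed (hence compact) sets to compact, hence closed, sets, so it is a closed map and therefore a homeomorphism onto its image. Thus $X$ is homeomorphic to a subspace of the metrizable space $[0,1]^{\mathbb N}$, for instance with the metric $\sum 2^{-i}|s_i-t_i|$, and so $X$ is metrizable.

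The main obstacle is the converse, specifically producing enough continuous real-valued functions to separate points. This rests on normality of compact Hausdorff spaces together with Urysohn's lemma, which I would quote as standard. The step that must be handled carefully is restricting to the countably many pairs with $\operatorname{cl}(B_m)\subseteq B_n$, so that only countably many functions are used and the target is the metrizable countable product rather than an uncountable cube.
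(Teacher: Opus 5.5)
Your proof is correct in both directions. The paper gives no argument of its own and only cites Engelking, where the result rests on the same standard ingredients you use: finite $1/k$-nets give a countable base in the compact metric case, and normality plus countably many Urysohn functions give a map into $[0,1]^{\mathbb N}$ in the converse. Your observation that a continuous injection from a compact space into a Hausdorff space is automatically an embedding replaces the check, needed in the general Urysohn metrization theorem, that the functions separate points from closed sets.
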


\begin{lemma} {\rm  Let $P$ be a dcpo. If $\Sigma P$ is core compact and second countable, then the Lawson topology $\lambda(\sigma(P))$ is second countable and compact Hausdorff. In particular, $\Lambda(\sigma(P))$ is metrizable.
}
\end{lemma}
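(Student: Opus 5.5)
Since $\Sigma P$ is core compact, $\sigma(P)$ is a continuous lattice by definition, so Lemma 4.5 applied to the complete lattice $\sigma(P)$ shows at once that $\Lambda(\sigma(P))$ is compact Hausdorff. Once second countability of $\lambda(\sigma(P))$ is established, Lemma 4.6 gives metrizability. The whole work therefore lies in producing a countable base for $\lambda(\sigma(P))$ out of a countable base $\mathcal{B}=\{B_n\mid n\in\mathbb N\}$ of $\Sigma P$. We may and do assume $\mathcal B$ is closed under finite unions; this is still countable.

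\textbf{Lower part.} The lower topology $\omega(\sigma(P))$ is generated by the sets $\sigma(P)\setminus\up W$ with $W\in\sigma(P)$. We claim the countable family $\{\sigma(P)\setminus\up B\mid B\in\mathcal B\}$ suffices. Let $U\in\sigma(P)\setminus\up W$, so $W\not\subseteq U$. Pick $x\in W\setminus U$ and a basic $B\in\mathcal B$ with $x\in B\subseteq W$. Then $B\not\subseteq U$, so $U\in\sigma(P)\setminus\up B$. Moreover, $W\subseteq U'$ implies $B\subseteq U'$, hence $\sigma(P)\setminus\up B\subseteq\sigma(P)\setminus\up W$.

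\textbf{Scott part.} For the continuous lattice $\sigma(P)$, the sets $\dda W=\{U\mid W\ll U\}$, $W\in\sigma(P)$, form a base of the Scott topology $\sigma(\sigma(P))$. We claim the countable family $\{\dda B\mid B\in\mathcal B\}$ already forms a base. Let $U\in\dda W$. By interpolation in a continuous lattice there is $W'$ with $W\ll W'\ll U$. Since $W'=\bigcup\{B\in\mathcal B\mid B\subseteq W'\}$ is a directed union (because $\mathcal B$ is closed under finite unions) and $W\ll W'$, there is $B\in\mathcal B$ with $W\subseteq B\subseteq W'$. Then $B\ll U$, so $U\in\dda B$, and $\dda B\subseteq\dda W$. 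Hence every Scott open set of $\sigma(P)$ is a union of sets $\dda B$ with $B\in\mathcal B$.

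\textbf{Conclusion.} Finite intersections of the two countable families above form a countable base of $\lambda(\sigma(P))$. With compact Hausdorffness from Lemma 4.5, Lemma 4.6 then yields metrizability. The main obstacle is the Scott part: it needs interpolation in $\sigma(P)$ and the fact that $\{\dda W\}$ is a base of $\sigma(\sigma(P))$, both standard for continuous lattices (\cite{GG03}), together with the closure of $\mathcal B$ under finite unions so that the approximating basic set can be found.
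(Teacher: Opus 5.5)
Your proof is correct and follows essentially the paper's route. A countable base of $\Sigma P$ closed under finite unions (the paper's sets $V_F$) yields the countable family $\{\dda B\}$ for $\sigma(\sigma(P))$, via interpolation and a directed-union argument, and the countable subbase $\{\sigma(P)\setminus\up B\}$ for $\omega(\sigma(P))$. Your single interpolation step and direct point-picking argument for the lower part are slightly leaner than the paper's Claims 1 and 2.

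Two small fixes. Your lemma numbers are shifted by one relative to the paper: Lawson compactness of a continuous lattice is Lemma 4.4, the metrization criterion is Lemma 4.5, and Lemma 4.6 is the statement itself. Also, $\mathcal B$ must contain $\emptyset$ (the empty union), since otherwise the bottom element $\emptyset$ of $\sigma(P)$ lies in no $\dda B$.
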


\begin{proof} As $\Sigma P$ is core compact, $\sigma(P)$ is a continuous lattice. By Lemma 4.4, $\lambda(\sigma(P))$ is compact and Hausdorff. Since $\Sigma P$ is second countable, we can enumerate $\mathscr{B}=\{U_{n}\mid n\in\mathbb{N}\}$ as a topology base of $\Sigma P$. For each finite subset $F\subseteq\mathbb{N}$, set $V_{F}=\bigcup\limits_{k\in F}U_{k}$. Set $$\mathscr{D}=\{\dda V_{F}\mid F \text{ is a finite subset of }\mathbb{N}\}.$$

$\mathbf{Claim \ 1:}$ $\mathscr{D}$ is a basis for $\Sigma\sigma(P)$.

 Let $\mathscr{U}\in\sigma(\sigma(P))$ and $U\in\mathscr{U}$. As $\sigma(P)$ is continuous, $\{\dda W\mid W\in\sigma(P)\}$ is a basis for $\Sigma\sigma(P)$. So there exists a $\widetilde{W}\in\sigma(P)$ such that $U\in\dda\widetilde{W}\subseteq\mathscr{U}$. By the interpolation property of way below relation, we can choose $\widetilde{U},\widehat{W}\in\sigma(P)$ satisfying $\widetilde{W}\ll\widehat{W}\ll\widetilde{U}\ll U$.
 Since $\mathscr{B}$ forms a base of $\Sigma P$, we can fix a subset $A\subseteq\mathbb{N}$ such that $\widetilde{U}=\bigcup\limits_{m\in A} U_{m}$. Take $$\mathscr{C}=\{V_{F}\mid F\text{ is a finite subset of }A\}.$$ Then $\mathscr{C}$ is directed and $\widetilde{U}=\bigcup\mathscr{C}$. It follows from $\widehat{W}\ll\widetilde{U}$ that $\widehat{W}\subseteq V_{F_{0}}\subseteq\widetilde{U}$ for some finite subset $F_{0}\subseteq A$. As a consequence, $\widetilde{W}\ll V_{F_{0}}\ll U$, equivalently, $$U\in\dda V_{F_{0}}\subseteq\dda\widetilde{W}\subseteq\mathscr{U}.$$ This proves Claim 1.

Set $$\mathscr{E}=\{\sigma(P)\setminus\up V_{F}\mid F\text{ is a finite subset of }\mathbb{N}\}.$$

$\mathbf{Claim \ 2:}$ $\mathscr{E}$ is a topology base for the lower topology $\omega(\sigma(P))$.

For each $H\in\sigma(P)$, we assert $\sigma(P)\setminus\up H=\bigcup\limits_{V_{F}\ll H}(\sigma(P)\setminus\up V_{F})$. Equivalently, $\up H=\bigcap\limits_{V_{F}\ll H}\up V_{F}$.
Clearly, $\up H\subseteq\bigcap\limits_{V_{F}\ll H}\up V_{F}$. Let $O\in\bigcap\limits_{V_{F}\ll H}\up V_{F}$. In other words, $O\in\sigma(P)$ and $\bigcup\limits_{V_{F}\ll H}V_{F}\subseteq O$. As $\sigma(P)$ is continuous, $H=\bigcup\limits_{K\ll H}K$. Choose any $K\ll H$. Repeat the process in Claim 1, we can find a $V_{F_{1}}$ such that $K\ll\ll V_{F_{1}}\ll H$. This means that $$\bigcup\limits_{V_{F}\ll H}V_{F}=H\subseteq O.$$ Consequently, $\bigcap\limits_{V_{F}\ll H}\up V_{F}\subseteq\up H$. This proves Claim 2.

By Claim 1 and Claim 2, $\lambda(\sigma(P))=\sigma(\sigma(P))\vee\omega(\sigma(P))$ is second countable. Hence, by Lemma 4.5, $\lambda(\sigma(P))$ is metrizable. \end{proof}

\begin{lemma} {\rm (\cite{RE77})  Let $(X,d)$ be a metric space. Then for $r\in\mathbb{R}$, $$\{(x,y)\in X^{2}\mid r\leq d(x,y)\}$$ is closed in $X\times X$.}
\end{lemma}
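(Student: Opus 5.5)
The statement says that for a metric space $(X,d)$ and $r\in\mathbb{R}$ the set $\{(x,y)\in X^{2}\mid r\leq d(x,y)\}$ is closed in $X\times X$. I will show this by writing the set as the preimage of a closed set under a continuous map. The key point is that the metric $d\colon X\times X\to\mathbb{R}$ is continuous for the product topology. Since $[r,\infty)$ is closed in $\mathbb{R}$, the set in question equals $d^{-1}([r,\infty))$ and is therefore closed.

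First I prove continuity of $d$ from the triangle inequality. For $(x,y),(x',y')\in X\times X$ we have
$$|d(x,y)-d(x',y')|\leq d(x,x')+d(y,y').$$
To check this, note that $d(x,y)\leq d(x,x')+d(x',y')+d(y',y)$, and the symmetric inequality holds with the primed and unprimed points exchanged. Now fix $\varepsilon>0$ and a point $(x,y)$. The basic open set $B(x,\varepsilon/2)\times B(y,\varepsilon/2)$ is a product-neighbourhood of $(x,y)$. By the inequality above, $d$ maps it into $(d(x,y)-\varepsilon,\,d(x,y)+\varepsilon)$. So $d$ is continuous at every point.

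Alternatively, I can show directly that the complement is open. Suppose $d(x,y)<r$ and set $\delta=r-d(x,y)>0$. Then every $(x',y')\in B(x,\delta/2)\times B(y,\delta/2)$ satisfies
$$d(x',y')\leq d(x',x)+d(x,y)+d(y,y')<d(x,y)+\delta=r.$$
Hence the complement $\{(x,y)\mid d(x,y)<r\}$ is open in $X\times X$. This version avoids invoking continuity of $d$ explicitly.

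There is no real obstacle in this argument. The only point needing care is that $X\times X$ carries the product topology. Its basic open sets are products of open balls, so the neighbourhood $B(x,\delta/2)\times B(y,\delta/2)$ used above is legitimate. The case $r\leq 0$ is trivial, since the set is then all of $X\times X$, and it is in any case covered by the general argument.
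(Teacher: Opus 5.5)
Your proof is correct and matches the standard argument: the paper gives no proof of its own and cites Engelking, where this follows, as in your proof, from continuity of $d\colon X\times X\to\mathbb{R}$ (via $|d(x,y)-d(x',y')|\leq d(x,x')+d(y,y')$), so the set is the closed preimage $d^{-1}([r,\infty))$. Your direct check that the complement $\{(x,y)\mid d(x,y)<r\}$ is open in the product topology is an equally valid alternative.
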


\begin{lemma} {\rm  Let $P$ be a continuous dcpo. Then $\{(x,y)\in L^{2}\mid x\leq y\}$ and $\{(x,y)\in L^{2}\mid y\leq x\}$ are closed in $\Lambda L\times\Lambda L$.}
\end{lemma}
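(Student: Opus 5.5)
The statement has typos: $P$ versus $L$. I read it as follows. For a continuous dcpo $L$, the order relation and its converse are closed in $\Lambda L\times\Lambda L$. The second set is the image of the first under the coordinate swap $(x,y)\mapsto(y,x)$, which is a homeomorphism of $\Lambda L\times\Lambda L$. So it suffices to show that $\{(x,y)\mid x\leq y\}$ is closed, i.e.\ that its complement $\{(x,y)\mid x\not\leq y\}$ is open in the product of Lawson topologies.

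Fix $(x,y)$ with $x\not\leq y$. We use continuity of $L$. Since $\uuar x$ is directed with $\bigvee\uuar x=x$, not every $b\ll x$ can satisfy $b\leq y$; otherwise $y$ would be an upper bound of $\uuar x$, giving $x\leq y$. So we choose $b\ll x$ with $b\not\leq y$. Then we take
\[
W=\dda b\times(L\setminus\up b).
\]
The set $\dda b$ is Scott open in a continuous dcpo; this uses the interpolation property, as recalled in the preliminaries. Hence $\dda b$ is Lawson open. The set $L\setminus\up b$ is open in the lower topology $\omega(L)$, hence Lawson open. Therefore $W$ is open in $\Lambda L\times\Lambda L$ and contains $(x,y)$, since $b\ll x$ and $b\not\leq y$.

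Finally, $W$ misses the order relation. If $(u,v)\in W$ and $u\leq v$, then $b\ll u\leq v$ gives $b\leq v$, contradicting $v\notin\up b$. Hence $W\subseteq\{(u,v)\mid u\not\leq v\}$, so the complement is open.

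The only nontrivial point is the choice of $b$ together with the Scott openness of $\dda b$. Both are standard consequences of continuity, so I expect no real obstacle; the argument uses nothing from the earlier lemmas except the definitions of $\sigma$, $\omega$ and $\lambda$.
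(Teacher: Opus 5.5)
Your proof is correct and is the paper's argument: choose $b\ll x$ with $b\not\leq y$, and use the Lawson-open box $\dda b\times(L\setminus\up b)$, which contains $(x,y)$ and lies in the complement of the order. The only difference is cosmetic: you get the second set from the coordinate-swap homeomorphism, where the paper just says ``similarly.''
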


\begin{proof} Let $A=\{(x,y)\in L^{2}\mid x\not\leq y\}$ and choose any $(a,b)\in A$. As $P$ is continuous, there exists $z\ll a$ such that $z\not\leq b$. Set $$U=\dda z\text{ and }V=P\setminus\up z.$$ Then we have $(a,b)\in U\times V$. Suppose $(u,v)\in U\times V$. Equivalently, $z\not\leq v$ and $z\ll u$. This leads to $z\leq u$. Assume $u\leq v$. It is immediately that $z\leq v$, impossible. It follows that $(a,b)\in U\times V\subseteq A$. As a consequence, $A$ is open in $\Lambda L\times\Lambda L$. Similarly, $\{(x,y)\in L^{2}\mid y\not\leq x\}$ is still open in $\Lambda L\times\Lambda L$. Therefore, their complements are closed.
\end{proof}

\begin{lemma} {\rm (\cite{RE77})  Let $(X,d)$ be a compact metric space. Then $X$ is a polish space.}
\end{lemma}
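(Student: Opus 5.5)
This is a classical fact (the paper cites \cite{RE77}), so the plan is a short direct argument: a compact metric space is complete and separable, and a separable completely metrizable space is by definition a Polish space. I would prove the two properties separately.

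For completeness, take a Cauchy sequence $(x_n)$ in $(X,d)$. I would argue that $(x_n)$ has a cluster point $x$. If it did not, every $y\in X$ would have an open ball $B(y,r_y)$ containing $x_n$ for only finitely many $n$. By compactness, finitely many such balls cover $X$, so only finitely many indices are accounted for, which is a contradiction. A Cauchy sequence with a cluster point converges to that point, by the usual $\varepsilon/2$ estimate. Hence $d$ is a complete metric.

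For separability, for each $k\in\mathbb{N}$ I would cover $X$ by the open balls $B(x,1/k)$ with $x\in X$. Compactness gives a finite subcover with centres $F_k$. The set $\bigcup_k F_k$ is countable. It is dense, because any open ball $B(x,\varepsilon)$ contains a point of $F_k$ once $1/k<\varepsilon$.

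Combining the two parts, the metric topology of $X$ is separable and completely metrized by $d$, so $X$ is Polish. The only step with any content is the existence of a cluster point, i.e.\ that a compact metric space is sequentially compact. I would handle it by the finite-subcover argument above rather than by quoting the equivalence of compactness and sequential compactness in metric spaces.
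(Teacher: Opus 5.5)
Your argument is correct: the cluster-point argument shows that $d$ is complete, the finite $1/k$-nets give a countable dense set, and together these give exactly the paper's definition of a Polish space (separable and completely metrizable). The paper gives no proof of its own and only cites Engelking, and the standard argument behind that citation uses the same two facts (a compact metric space is complete and totally bounded, hence separable), so your route is essentially the intended one.
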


\begin{lemma} {\rm (\cite{EWWF31})  Let $X$ be a polish space and $G$ a $G_{\delta}$ set. Then the subspace $G$ is a polish space.}
\end{lemma}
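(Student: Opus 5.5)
This lemma is the classical theorem of Alexandrov, and I would prove it in two stages: first for open subsets, then for countable intersections of them. Fix a complete metric $d$ on $X$ compatible with its topology, which exists because $X$ is Polish. Separability of any subspace of $G$ is automatic, since a subspace of a separable metrizable space is separable. So the only real work is to produce a complete metric on $G$ compatible with the subspace topology.

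\emph{Open sets.} Let $U\subseteq X$ be open. If $U=X$ there is nothing to do, so assume $F=X\setminus U\neq\emptyset$. Define
$$d_U(x,y)=d(x,y)+\Bigl|\frac{1}{d(x,F)}-\frac{1}{d(y,F)}\Bigr|,\qquad x,y\in U.$$
This is well defined because $d(x,F)>0$ for $x\in U$, as $F$ is closed. The steps are as follows.
\begin{enumerate}
\item Check that $d_U$ is a metric; this is immediate from the triangle inequality.
\item Check that $d_U$ induces the subspace topology on $U$. On $U$ the map $x\mapsto 1/d(x,F)$ is continuous, so $d_U$-convergence and $d$-convergence of sequences in $U$ coincide.
\item Check completeness, which I expect to be the main obstacle. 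Let $(x_n)$ be $d_U$-Cauchy. Then it is $d$-Cauchy, so it converges in $X$ to some $x$. Moreover $1/d(x_n,F)$ is Cauchy in $\mathbb{R}$, hence bounded, so $d(x_n,F)\geq\varepsilon>0$ for all $n$. Continuity of $d(\cdot,F)$ gives $d(x,F)\geq\varepsilon$, so $x\in U$. By the previous step, $x_n\to x$ in $d_U$.
\end{enumerate}

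\emph{$G_\delta$ sets.} Write $G=\bigcap_{n\geq1}U_n$ with each $U_n$ open, so each $U_n$ is Polish by the previous stage.
\begin{enumerate}
\item The countable product $Y=\prod_{n}U_n$ is Polish, with complete metric $\sum_n 2^{-n}\min\{d_{U_n},1\}$.
\item Consider the diagonal map $\Delta:G\to Y$, $x\mapsto(x,x,\dots)$. It is a homeomorphism onto its image, since the first coordinate projection inverts it continuously.
\item The image is $\{(y_n)\in Y\mid y_n=y_m \text{ for all } n,m\}$. This set is closed in $Y$, because each condition $y_n=y_m$ is closed: it is the preimage of the diagonal of the Hausdorff space $X\times X$ under continuous maps.
\item A closed subset of a complete metric space is complete, so $\Delta(G)$, and hence $G$, is Polish.
\end{enumerate}
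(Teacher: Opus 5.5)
Your proof is correct and is the standard Alexandrov argument. For an open set you use the corrected metric $d(x,y)+\bigl|1/d(x,F)-1/d(y,F)\bigr|$; for a $G_\delta$ set you embed $\bigcap_n U_n$ diagonally as a closed subspace of $\prod_n U_n$, which amounts to restricting the metric $\sum_n 2^{-n}\min\{d_{U_n},1\}$ to $G$. The paper gives no proof of its own here and only cites Kechris, whose proof is essentially yours.
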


\begin{lemma} {\rm (\cite{EWWF31})  Let $\{X_{n}\mid n\in\mathbb{N}\}$ be a family of polish spaces. Then the countable topological sum $\coprod\limits_{n\in\mathbb{N}}X_{n}$ is a polish space.}
\end{lemma}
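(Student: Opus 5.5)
The statement is the standard fact that a countable topological sum of Polish spaces is Polish. The plan is to check separability and complete metrizability directly, using a compatible metric built from metrics on the summands.

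\textbf{Step 1: set up compatible metrics.} For each $n\in\mathbb{N}$ fix a complete metric $d_{n}$ on $X_{n}$ compatible with its topology, and replace it by $\min\{d_{n},1\}$. This metric is still complete, still induces the same topology, and is bounded by $1$.

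\textbf{Step 2: define a metric on the sum.} On $X=\coprod_{n\in\mathbb{N}}X_{n}$ put
\[
d(x,y)=d_{n}(x,y)\ \text{ if } x,y\in X_{n},\qquad d(x,y)=2\ \text{ otherwise.}
\]
The triangle inequality holds as follows. If all three points lie in the same summand, it is the triangle inequality for $d_{n}$. Otherwise the right-hand side contains at least one term equal to $2$, while the left-hand side is at most $2$.

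\textbf{Step 3: $d$ induces the sum topology.} Every ball of radius less than $2$ lies inside a single summand and is a $d_{n}$-ball there. Hence each $X_{n}$ is $d$-open and the subspace topology on $X_{n}$ is the original one. These two facts characterize the topology of the topological sum.

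\textbf{Step 4: completeness.} A $d$-Cauchy sequence eventually has all distances less than $2$, so its tail lies in one summand $X_{n}$. It is then $d_{n}$-Cauchy, hence converges in $X_{n}$, and therefore converges in $X$.

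\textbf{Step 5: separability.} If $D_{n}$ is a countable dense subset of $X_{n}$, then $\bigcup_{n}D_{n}$ is countable and meets every nonempty open set of $X$. Indeed, such an open set meets some $X_{n}$ in a nonempty open subset of $X_{n}$, which contains a point of $D_{n}$.

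\textbf{Main obstacle.} No step is a real obstacle. The only point needing care is the triangle inequality in the mixed case, which the truncation in Step 1 (making every $d_{n}\le 1<2$) handles.
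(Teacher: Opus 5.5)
Your proof is correct and complete. The truncated metrics $\min\{d_n,1\}$, together with the constant distance $2$ between distinct summands, give a compatible complete metric, and $\bigcup_n D_n$ witnesses separability. The paper gives no argument of its own here and only cites Kechris, where the result is proved by essentially this same construction.
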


\begin{lemma} {\rm (\cite{EWWF31})  Let $X$ be a standard Borel space and $R$ an analytic well-founded relation. Then there is a $\gamma<\omega_{1}$ such that for each $x\in X$, $\rho_{R}(x)<\gamma$.}
\end{lemma}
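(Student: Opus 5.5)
This is the Kunen--Martin theorem. I would prove it by coding $R$-descending chains into a well-founded tree on a \emph{countable} set. The reason this helps is that such a tree is countable, so its rank function takes values below $\omega_{1}$; transferring that bound back to $\rho_{R}$ gives the required $\gamma$.

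\textbf{Reductions.} Since $X$ is standard Borel, I fix a Polish topology on $X$ generating its Borel sets, with a compatible complete metric. If $R=\emptyset$, every rank is $0$ and $\gamma=1$ works, so assume $R\neq\emptyset$. Since $R$ is analytic in $X\times X$, there is a closed set $C\subseteq X\times X\times\mathbb{N}^{\mathbb{N}}$ whose projection onto the first two coordinates is $R$. I fix a countable family $\{N_{k}\mid k\in\mathbb{N}\}$ of open sets in $X$, closed under finite intersections, containing a base of sets of arbitrarily small diameter. Then a point of $X$ is determined by a descending sequence of such sets whose closures shrink to it. I also use the cylinders $N_{s}$ with $s\in\mathbb{N}^{<\mathbb{N}}$ for $\mathbb{N}^{\mathbb{N}}$.

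\textbf{The tree $T$.} A node of $T$ of length $n$ is a tuple $(u_{0},\dots,u_{n-1};\,v_{1},\dots,v_{n-1})$ satisfying the following conditions.
\begin{enumerate}[label=(\roman*)]
\item Each $u_{i}$ is a finite descending string of basic open sets of length $n$, with diameters at most $2^{-j}$ at stage $j$.
\item Each $v_{i}\in\mathbb{N}^{n}$.
\item For each $i\geq 1$, the box $\overline{U_{i}}\times\overline{U_{i-1}}\times N_{v_{i}}$ meets $C$, where $U_{i}$ is the last entry of $u_{i}$.
\end{enumerate}
Nodes are ordered by coordinatewise extension, with one new chain element added at each step; concretely, I use the standard diagonal refinement in which the length grows by one. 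This $T$ lives on a countable alphabet.

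\textbf{$T$ is well-founded.} Suppose $T$ had an infinite branch. Each $u_{i}$ then determines, by completeness, a point $x_{i}$, and each $v_{i}$ a point $w_{i}\in\mathbb{N}^{\mathbb{N}}$. The closedness of $C$ gives $(x_{i},x_{i-1},w_{i})\in C$, so $x_{i}\,R\,x_{i-1}$ for all $i$. This is an infinite $R$-descending sequence, contradicting the well-foundedness of $R$. Hence the tree rank $\rho_{T}$ is defined, and since $T$ is countable, $\rho_{T}$ takes values in some countable ordinal $\gamma$.

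\textbf{Comparison of ranks.} By induction on $\rho_{R}(x)$, I would show the following: whenever a node $t$ ends with a coordinate $u$ approximating $x$, i.e.\ $x\in U$ for the last entry $U$ of $u$, and all earlier chain elements of $t$ are compatible with genuine points forming an $R$-chain ending at $x$ with genuine witnesses, then $\rho_{R}(x)\leq\rho_{T}(t)$. The inductive step works because any $z\,R\,x$ with witness $w$ allows the node $t$ to be extended by a fresh coordinate approximating $z$ and a new $v$ approximating $w$. Applying this to the length-one nodes gives $\rho_{R}(x)<\gamma$ for all $x\in X$.

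\textbf{Main obstacle.} The hard part is the bookkeeping in the definition of $T$. The extension relation must simultaneously refine all previously chosen approximations, so that infinite branches yield convergent sequences in every coordinate. At the same time, the refinement must remain possible in the inductive step, which needs the ``genuine witnesses'' invariant. I would handle this by the usual diagonal scheme, in which at stage $n$ every coordinate is refined by one more level.
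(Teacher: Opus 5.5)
Your proposal is correct and is essentially the standard proof of the Kunen--Martin theorem (Kechris, \emph{Classical Descriptive Set Theory}, Theorem 31.1); the paper does not prove this lemma and simply cites that reference. Working directly in the Polish space with shrinking basic open sets and the closed set $C$, rather than first passing to Baire space and a tree on $\mathbb{N}^{3}$, is only a cosmetic variation. Your ``tree'' is really a well-founded one-step-extension relation on a countable set, and that is all the countable rank bound requires.
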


\begin{theorem}
{\rm Let $P$ be a dcpo such that $\Sigma P$ is locally compact and second countable. Then $\Sigma P$ is sober.}
\end{theorem}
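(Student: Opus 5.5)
The plan is to reduce sobriety to a \emph{countable intersection property} of irreducible closed sets, as in Claim~4 of Theorem~3.2. The key simplification from second countability is that every principal filter is a Scott $G_\delta$ set. If $\mathscr B=\{U_n\mid n\in\mathbb N\}$ is a countable base of $\Sigma P$, then $\up b=\bigcap\{U_n\mid b\in U_n\}$, since in a $T_0$ space the saturation of a point is the intersection of its open neighbourhoods. So the whole proof reduces to the following.

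\textbf{Key claim.} If $A$ is an irreducible Scott closed set and $\{E_n\}_{n\geq1}$ are Scott open sets with $A\cap E_n\neq\emptyset$ for every $n$, then $A\cap\bigcap_n E_n\neq\emptyset$.

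Granting this, the end of the argument copies Claim~4. For $p,q\in A$, write $\up p=\bigcap_n G_n$ and $\up q=\bigcap_n H_n$ with basic open sets $G_n,H_n$, each of which meets $A$. Applying the key claim to the interleaved sequence $G_1,H_1,G_2,H_2,\dots$ gives $l\in A\cap\up p\cap\up q$. Hence $A$ is directed, $a^\ast=\bigvee A\in A$, and therefore $A=\dn a^\ast$, so $\Sigma P$ is sober.

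To prove the key claim, first replace $E_n$ by $E_1\cap\cdots\cap E_n$. By irreducibility this still meets $A$, so we may assume the sequence is decreasing. Then, exactly as in Claim~3 of Theorem~3.2, local compactness produces Scott open sets $F_n$ and compact upper sets $Q_n$ such that
\[
Q_{n+1}\subseteq F_n\subseteq Q_n\subseteq E_n\quad\text{and}\quad A\cap F_n\neq\emptyset .
\]
Each $A\cap Q_n$ is then a nonempty compact set, but without well-filteredness a decreasing sequence of such sets need not have a common point. This is where I would bring in Lemma~4.2 and the descriptive set theory.

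Put $A_n=\dn(A\cap Q_n)$. These form a decreasing sequence of lower sets contained in $A$, so Lemma~4.2 gives
\[
\mathrm{cl}\Bigl(\bigcap_n A_n\Bigr)=\bigcap_n \mathrm{cl}(A_n).
\]
The aim is to show that $\bigcap_n\mathrm{cl}(A_n)$ meets every $F_m$. Then $\bigcap_n A_n$ meets $F_m$, which yields a point lying below elements of every $Q_n$. One then iterates this derivative-like construction transfinitely, intersecting at limit stages, which have countable cofinality at each step, and using Lemma~4.2 to keep nonemptiness.

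To see that the iteration stops at a countable ordinal, I would code points of $P$ by $x\mapsto P\setminus\dn x$ inside $\Lambda(\sigma(P))$. By Lemma~4.6 this space is compact metrizable, hence Polish by Lemma~4.9. The order is closed by Lemma~4.8, and the relevant sets are $G_\delta$, hence Polish by Lemma~4.10, with countable sums handled by Lemma~4.11. The relation ``$y$ is strictly above $x$ inside the current stage'' is then analytic. If the process never produced a point of $A\cap\bigcap_n Q_n$, this relation would be well-founded, and Lemma~4.12 would bound its rank by some $\gamma<\omega_1$. Running the iteration to stage $\gamma$ would then give a contradiction with nonemptiness.

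I expect the main obstacle to be exactly this step: setting up the correct transfinite derivative on $A$ and proving that well-foundedness together with the countable rank bound forces a contradiction. The reductions before it are routine adaptations of Section~3.
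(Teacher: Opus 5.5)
Your reduction is sound. Second countability makes each $\up b$ a Scott $G_\delta$, and a countable intersection property for irreducible closed sets gives sobriety. In fact, applying it once to the basic open sets meeting $A$ already produces a greatest element of $A$; the paper proves exactly this, in the form $\bigcap_n U_n\neq\emptyset$ for $U_n=B_0\cap\cdots\cap B_n$ inside $F$. The gap is that the key claim is the entire content of the theorem, and your argument stops where the work starts. The ingredients you do sketch would also not work as stated. Lemma 4.12 bounds the rank of one fixed analytic relation on one fixed Polish space, chosen before any derivative is run. So a relation defined \emph{inside the current stage} is not admissible, since nothing shows the stages are Borel. More seriously, \emph{strictly above} is not made well-founded by the hypothesis $A\cap\bigcap_n Q_n=\emptyset$: an infinite increasing chain in $A$ has its supremum in $A$, but nothing places that supremum in $\bigcap_n Q_n$. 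The paper's relation carries an index that increases along it. On $Z=\coprod_{n,j}E_{n,j}$, whose points are codes $\dn s$ with $s\in\operatorname{Min}K_j\cap U_n$, it sets $(n+1,k,\dn t)\prec(n,j,\dn s)$ if and only if $s\le t$. An infinite descending chain then gives $s_0\le s_1\le\cdots$ with $s_i\in U_{n_0+i}$, whose supremum lies in $F\cap\bigcap_n U_n$.

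The second missing idea is what makes the codes Polish and keeps the stages dense. You say \emph{the relevant sets are $G_\delta$} without naming them, and you give no reason why the codes $P\setminus\dn x$ of points of $F$, or of a stage, form a Borel set. The paper obtains Polish pieces only from minimal points of compact sets. There $Q_K=\{A\in M\mid A\cap K\neq\emptyset\}$ is closed and $\operatorname{Min}Q_K=\eta(\operatorname{Min}K)$. Moreover $\operatorname{Min}Q_K$ is $G_\delta$, since $Q_K\setminus\operatorname{Min}Q_K=\bigcup_r\pi_2(H_r(K))$ with each $H_r(K)$ compact; Lemma 4.3 is used here. This forces the derivative onto $S=\bigcup_j\operatorname{Min}K_j$, with $K_j$ compact saturated, $\operatorname{int}K_j\neq\emptyset$ and $K_j\subseteq U_j$, and the derivative is $S_{\alpha+1}=S_\alpha\cap\bigcap_n\dn(S_\alpha\cap U_n)$. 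Density of each stage then rests on three facts:
\begin{enumerate}
\item $S_\alpha$ is downward closed within $S$;
\item Lemma 4.2 applies to the decreasing lower sets $\dn(S_\alpha\cap U_n)$, which are dense by irreducibility;
\item a point $z\in\operatorname{int}K_j\cap\bigcap_n\dn(S_\alpha\cap U_n)$ can be pushed down to some $m\in\operatorname{Min}K_j$, which lies in $S_{\alpha+1}\cap U_j$.
\end{enumerate}
This uses a compact neighbourhood inside \emph{every} basic open set, not just the chain $Q_n$ attached to your $E_n$. Your step with $A_n=\dn(A\cap Q_n)$ gives a point below members of every $Q_n$, but it says nothing about where the next stage lives or why it stays dense. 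Only with all of this does the induction \emph{$s\in S_\alpha$ implies $\rho_\prec\geq\alpha$ at every code of $s$} combine with $S_\gamma\neq\emptyset$ to contradict the bound $\gamma<\omega_1$.
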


\begin{proof}

Let $F$ be an irreducible closed subset in $\Sigma P$.

$\mathbf{Claim\ 1:}$ $\sigma(F)=\{U\cap F\mid U\in\sigma(P)\}$.

It is not difficult to check that each $U\cap F$ is Scott open in the subdcpo $F$. Suppose $\widetilde{U}$ is open in $\Sigma F$. Set $\widehat{U}=P\setminus(F\setminus\widetilde{U})$. It follows from $\widetilde{U}\in\sigma(F)$ and $F\in\Gamma P$ that $\widehat{U}\in\sigma(P)$. Besides, $$\widehat{U}\cap F=(P\setminus(F\setminus\widetilde{U}))\cap F=((P\setminus F)\cup\widetilde{U})\cap F=\widetilde{U}.$$ Hence, Claim 1 is proved.

Since $\Sigma P$ is locally compact and second countable, using Claim 1, $\Sigma F$ is also locally compact and second countable.

From now on, all orders, open sets and closures are taken in this
inherited dcpo $F$.

We enumerate a countable base of $\Sigma F$ as $$\mathscr B_0=\{B_n\mid n\in\mathbb N, B_{n} \mbox{ is a nonempty open set in } \Sigma F\}$$ and put
$U_n=\bigcap\limits_{i=0}\limits^{n}B_i$. As $F$ is irreducible, each $U_{n}$ is still nonempty. In addition, for each nonempty $O\in\sigma(F)$, there exist a $j\in\mathbb{N}$ such that $U_{j}\subseteq O$.

In the following, assume that $\bigcap\limits_{n\in\mathbb{N}}U_{n}=\emptyset$.

As $\Sigma F$ is locally compact, we can
choose compact saturated subsets $K_{n}\in K(F)$ such that
$$
\varnothing\neq\operatorname{int}(K_n)
\subseteq K_n\subseteq U_n.
$$

By Lemma 4.3, For every nonempty compact subset $K_{n}$,
$K_{n}=\uparrow\operatorname{Min}K_{n}$.

Put $$S=\bigcup_{j\in\mathbb N}\operatorname{Min}K_j.$$ As each nonempty subset $\operatorname{Min}K_n$ is contained in $U_{n}$. We assert that $cl(S)_{\Sigma F}=F$.

For all countable ordinals $\lambda$, define
$$S_0=S,\ S_{\alpha+1}=S_\alpha\cap
  \bigcap_{n\in\mathbb N}\dn(S_\alpha\cap U_n),$$ and if $0\leq\lambda<\omega_{1}$ is a limit ordinal,
$$S_\lambda=\bigcap_{\alpha<\lambda}S_\alpha.$$

This induction gives that $S_\alpha\subseteq S_\beta$ if $\beta\leq\alpha<\omega_1$.

$\mathbf{Claim\ 2:}$ If $s\in S_\alpha,r\in S$ and $ r\leq s$, then $r\in S_\alpha$.

The statement holds when $\alpha=0$.

Suppose this result holds for every countable ordinal $\alpha$. Let $s\in S_{\alpha+1}$ and $r\leq s$ for some $r\in S$. Then $s\in S_{\alpha}$ and for each $n\in\mathbb{N}$, there exists $s_{n}\in S_{\alpha}\cap U_{n}$ satisfying $s\leq s_{n}$. So $r\leq s_{n}$ and further $r\in\bigcap_{n\in\mathbb N}\downarrow(S_\alpha\cap U_n)$.
 By assumption, $r\in S_{\alpha}$. Thus, $r\in S_{\alpha+1}$.

Suppose $\alpha< \omega_{1}$ is a limit ordinal and the statement is valid for all $\beta<\lambda$. The construction of $S_{\alpha}=\bigcap\limits_{\beta<\alpha}S_\beta$ shows that $S_{\alpha}$ is a lower set in $S$.

Therefore, Claim 2 is proved.

$\mathbf{Claim\ 3:}$ $\forall\ \alpha<\omega_1$, $cl(S_\alpha)_{\Sigma F}=F$.

We proceed by transfinite induction on $\alpha$.
The case $\alpha=0$ is immediate by the fact $cl(S)_{\Sigma F}=F$.

Suppose first that $\alpha<\omega_{1}$ is not a limit ordinal and $cl(S_\alpha)_{\Sigma F}=F$.
Set $$A_n=\dn(S_\alpha\cap U_n).$$ As $F$ is irreducible in $\Sigma F$, we deduce that $cl(A_n)_{\Sigma F}=F$. By Lemma 4.2, $cl(\bigcap\limits_n A_n)_{\Sigma F}=F$. Let $\widetilde{O}\in\sigma(F)$ be a nonempty open set. We can fix a $U_{j}\subseteq \widetilde{O}$. Choose a $$z\in \operatorname{int}K_j\cap\bigcap\limits_n A_n.$$ Then there exist a $m\in{\rm Min}(K_{j})$ satisfying $m\leq z$. At the same time, for each $n$, we can fix a $t_{n}\in S_{\alpha}\cap U_{n}$ such that $z\leq t_{n}$.
By Claim 2, $m\in S_{\alpha}$ and further $m\in\bigcap\limits_{n}A_{n}$. So we obtain $$m\in S_{\alpha+1}\cap{\rm Min}(K_{j})\subseteq S_{\alpha+1}\cap K_{j}\subseteq S_{\alpha+1}\cap U_{j}\subseteq S_{\alpha+1}\cap\widetilde{O}.$$ Now, we assert that $cl(S_{\alpha+1})_{\Sigma F}=F$. This proves the successor step.

Finally, suppose that
$\alpha$ is a nonzero limit ordinal. Suppose that $cl(S_\beta)_{\Sigma F}=F$ for every
$\beta<\alpha$. Choose an increasing cofinal
sequence $\{\beta_{n}\mid n\in\mathbb{N}\}$ such that
$$\beta_n<\alpha \text{ and } \sup\limits_n\beta_n=\alpha.$$ Then we get $S_\alpha=\bigcap\limits_n S_{\beta_n}$. By assumption and Lemma 4.2,

$$
cl(\bigcap_n\dn S_{\beta_n})_{\Sigma F}
=
\bigcap_n cl(\dn S_{\beta_n})_{\Sigma F}
=F.
$$
For any nonempty open set $\widehat{O}$, choose
$$
U_i\subseteq \widehat{O}
\text{ and }z_{1}\in\operatorname{int}K_i \cap\bigcap_n\dn S_{\beta_n}.$$  Select a $m_{1}\in\operatorname{Min}K_i$ such that $m_{1}\leq z_{1}$.

For every index $n$, there is
$r_n\in S_{\beta_n}$ satisfying $z_{1}\leq r_n$. By Claim 2 again, $m_{1}\in\bigcap\limits_{n} S_{\beta_n}$. It follows from $$\operatorname{Min}K_i\subseteq K_{i}\subseteq U_{i}\subseteq\widehat{O}$$ that $m_{1}\in S_{\alpha}\cap \widehat{O}$. Consequently, $cl(S_{\alpha})_{\Sigma F}=F$. We finish the proof of Claim 3.

Furthermore, by Claim 3 and the irreducibility of $F$, it follows that for each $n$ and $\alpha$,
$$cl(S_\alpha\cap U_n)_{\Sigma F}=F.$$

By Lemma 4.6, $\Lambda(\sigma(P))$ is a compact metrizable space. Define a mapping $c:\Gamma(P)\longrightarrow\sigma(P)$ by $c(A)=P\setminus A$. The topology $\tau$ on $\Gamma(P)$ is given by
$$
\tau=c^{-1}(\lambda(\sigma(P))).
$$
Then $M=(\Gamma(P),\tau)$ is also a compact metrizable space because $c$ is a homeomorphism between $M$ and $\Lambda(\sigma(P))$.

Define the order embedding $\eta:F\longrightarrow M$ by $\eta(x)=\dn x$. For each open set $U$, let
$$
\Diamond U=\{A\in\Gamma(P)\mid A\cap U\neq\varnothing\}.
$$
Then we have $$c(\Diamond U)=\{B\in\sigma(P)\mid (P\setminus B)\cap U\neq\emptyset\}=\sigma(P)\setminus\up U\in\lambda(\sigma(P)).$$ In other words, $\Diamond U\in\tau$. In addition, $\eta^{-1}(\Diamond U)=U$ as $U$ is upper in $P$.

For each $K\in K(P)$, put
$$
Q_K=\{A\in M\mid A\cap K\neq\varnothing\}.
$$
It follows from the compactness of $K$ that $$\Phi(K)
=\{U\in \sigma(P)\mid K\subseteq U\}$$ is Scott open in $\sigma(P)$. One can easily verify that
$c(M\setminus Q_K)=\Phi(K)$. Consequently, $Q_K$ is closed in $M$. Hence $Q_K$ is compact.

$\mathbf{Claim\ 4:}$ $\operatorname{Min}Q_K=\eta(\operatorname{Min}K)$.

Let $A\in\operatorname{Min}Q_K$. Choose a $k\in A\cap K$. Then $\dn k\in Q_{K}$ and $\dn k\subseteq A$. The minimality of $A$ implies $A=\dn k=\eta(k)$. Suppose there is a $k_{1}\in K$ such that $k_{1}\leq k$. Then $\dn k_{1}\in Q_{K}$ and $\dn k_{1}\subseteq\dn k$. Using $A\in\operatorname{Min}Q_K$ again, $\dn k_{1}=\dn k$. Equivalently, $k\in\operatorname{Min}K$.

Conversely, choose any $k\in\operatorname{Min}K$. Suppose $B\in Q_K$ and $B\subseteq\dn k$. Then we can fix a $b\in B\cap K$ such that $b\leq k$. It follows from $k\in\operatorname{Min}K$ that $b=k$. As $B$ is lower in $P$, $\dn b=\dn k\subseteq B$. As a result, $B=\dn k$, proving that $\eta(k)\in\operatorname{Min}Q_K$.

Therefore, Claim 4 is obtained.

Fix a metric $d:M\times M\longrightarrow[0,\infty)$. For every positive integer $r$, define
$$
H_r(K)=
\left\{
(A_{1},A_{2})\in Q_K\times Q_K\mid
A_{1}\subseteq A_{2}\text{ and }\frac{1}{r}\leq d(A_{1},A_{2})
\right\}.
$$
Actually, $H_r(K)=(Q_K\times Q_K)\cap\mathscr{A}_{1}\cap\mathscr{A}_{2}$, where $$\mathscr{A}_{1}=\{(A_{1},A_{2})\in M^{2}\mid A_{1}\subseteq A_{2}\}\text{ and } \mathscr{A}_{2}=\{(A_{1},A_{2})\in M^{2}\mid \frac{1}{r}\leq d(A_{1},A_{2})\}.$$ By Lemma 4.7 and Lemma 4.8, $\mathscr{A}_{1}$ and $\mathscr{A}_{2}$ are closed in $M^{2}$. As $Q_{K}$ is compact in $M$, each $H_r(K)$ is compact in $M^{2}$.

$\mathbf{Claim\ 5:}$ $\operatorname{Min}Q_K$ is a $G_{\delta}$ subset in $M$.

As $H_r(K)$ is compact in $M^{2}$, $\pi_{2}(H_r(K))$ is compact in $M$. Then $\pi_{2}(H_r(K))$ and $Q_{K}$ are closed in $M$ because $M$ is a compact Hausdorff space. Furthermore, we assert that
$$
Q_K\setminus\operatorname{Min}Q_K
=
\bigcup_{r\geq1}\pi_2(H_r(K)).$$

Take any $\widehat{A}\in Q_K\setminus\operatorname{Min}Q_K$. Then $\widehat{A}\cap K\neq\emptyset$ and $\widehat{A}\in\Gamma(P)$. Choose $\hat{a}\in \widehat{A}\cap K$. By Lemma 4.3, there exists $\hat{k}\in K$ such that $\hat{k}\leq\hat{a}$. It follows from $\widehat{A}\not\in\operatorname{Min}Q_K=\eta(\operatorname{Min}K)$ that $\dn\hat{k}$ is a proper subset of $\widehat{A}$. This means that $\frac{1}{\hat{r}}\leq d(\dn\hat{k},\widehat{A})$ for some positive integer $\hat{r}$. Consequently, $\widehat{A}\in\pi_2(H_{\hat{r}}(K))$.

Conversely, let $\widetilde{A}\in\pi_2(H_{\tilde{r}}(K))$ ($1\leq\tilde{r}$). Fix a $\widetilde{B}\in Q_{K}$, $\widetilde{B}\subseteq \widetilde{A}$ and $\frac{1}{\tilde{r}}\leq d(\widetilde{B},\widetilde{A})$. So we have $\widetilde{B}\neq\widetilde{A}$. Hence $\widetilde{A}$ is not a minimal element in $Q_{K}$.

Now, we have proved $Q_K\setminus\operatorname{Min}Q_K=\bigcup\limits_{r\geq1}\pi_2(H_r(K))$. As a consequence, $$\operatorname{Min}Q_K=Q_K\setminus\bigcup\limits_{r\geq1}\pi_2(H_r(K))=Q_K\cap(\bigcap\limits_{1\leq r}(P\setminus\pi_2(H_r(K)))).$$
By Lemma 4.9, the closed set $Q_{K}$ is a $G_{\delta}$ set. Hence, $\operatorname{Min}Q_K$ is also a $G_{\delta}$ set in $M$. The Claim 5 is proved.

For each $n,j\in\mathbb{N}$, set
$$E_{n,j}=\operatorname{Min}Q_{K_j}\cap\Diamond U_n.$$

By Lemma 4.9, Lemma 4.10 and Claim 5, each subspace $(E_{n,j},\tau|_{E_{n,j}})$ is Polish. Consider the topological sum
$$
Z=\coprod_{(n,j)\in\mathbb N^2}E_{n,j}.$$
By Lemma 4.11, $Z$ is polish. Note that $A\in E_{n,j}$ if and only if $A=\dn a$ for some $a\in\operatorname{Min}K_j\cap U_n$. For convenience, we can write each element of $Z$ as
$$(n,j,\dn s),\text{ for some } s\in\operatorname{Min}K_j\cap U_n.$$ Define a mapping
$\pi:Z\longrightarrow M$ by $\pi(n,j,\dn s)=\eta(s)$. Clearly, the map $\pi$ is continuous by the definition of the topological sum.

Define a relation $\prec$ on $Z$ by
$$
(n+1,k,\dn t)\prec(n,j,\dn s)
\quad\Longleftrightarrow\quad
\dn s\subseteq\dn t.$$

$\mathbf{Claim\ 6:}$ There exists a $\gamma<\omega_{1}$ such that
$\forall(n,j,\dn s)\in Z,\ \rho_{\prec}(n,j,\dn s)<\gamma$.

We assert that the relation $\prec$ is a analytic well-founded relation.

Suppose $(x_{m},y_{m})_{m\in\mathbb{N}}\subseteq\prec$ is a sequence converging to $(x,y)$. For convenience, let $$x_{m}=(n_{m}+1,k_{m},\dn t_{m})\text{ and }y_{m}=(n_{m},j_{m},\dn s_{m})$$ with $\dn s_{m}\subseteq\dn t_{m}$. As different parts $E_{n,j}$ are disjoint and clopen, we can fix $x\in E_{p,q}$,$y\in E_{l,r}$ and $N\in\mathbb{N}$ such that for each $N\leq m$, $(x_{m},y_{m})\in E_{p,q}\times E_{l,r}$. Then we obtain
$$p=n_{m}+1,q=k_{m},l=n_{m},r=j_{m}(N\leq m).$$
Now $\{(p,q,\dn t_{m})\}_{N\leq m}$ and $\{(l,r,\dn s_{m})\}_{N\leq m}$ are two sequences converging to $x$ and $y$ in $E_{p,q}$ and $E_{l,r}$ respectively. For convenience, we fix $x=(p,q,\dn t)$ and $y=(l,r,\dn s)$. As a consequence, $\{\dn t_{m}\}_{N\leq m}$ and $\{\dn s_{m}\}_{N\leq m}$ converge to $\dn t$ and $\dn s$ in $M$ respectively. Then the sequence $\{(\dn s_{m},\dn t_{m})\}_{N\leq m}$ contained in $\mathscr{H}$ converges to $(\dn s,\dn t)$, where $\mathscr{H}=\{(A,B)\in M^{2}\mid A\subseteq B\}$. By Lemma 4.8, $$\mathscr{H}=(c\times c)^{-1}(\{(U,V)\in\sigma(F)\times\sigma(F)\mid V\subseteq U\})$$ is closed in $M^{2}$. Then we have $(\dn s,\dn t)\in\mathscr{H}$ since $M^{2}$ is metrizable. In other words, $\dn s\subseteq\dn t$ and hence $x\prec y$. This proves that $\prec\subseteq Z^{2}$ is sequentially closed and further closed. Note that every closed set is a analytic set. So we deduce that $\prec$ is analytic.

Assume that the relation $\prec$ fails to be well-founded. Then there is a decreasing chain $$(n_{0},j_{0},\dn s_{0})\succ(n_{1},j_{1},\dn s_{1})\succ\cdot\cdot\cdot\succ(n_{i},j_{i},\dn s_{i})\succ(n_{i+1},j_{i+1},\dn s_{i+1})\succ\cdot\cdot\cdot.$$ By the definition of $\prec$, we have
$$
s_0\leq s_1\leq\cdots\leq s_{i}\leq s_{i+1}\leq\cdots
\text{ and }
s_i\in U_{n_0+i}.
$$
As $F$ is Scott closed in $P$, $s=\sup\limits_i\limits^{P} s_i\in F$. For each $n$, choose a $i$ such that $n\leq n_{0}+i$. Then we have
$$s_i\in U_{n_0+i}\subseteq U_n$$ and further $s\in U_n$ because $U_{n}$ is upper. This shows that $s\in\bigcap\limits_{n\in\mathbb{N}}U_{n}$, a contradiction. Hence $\prec$ is well-founded.

We determine the measure $\mathbf{S}$ on $Z$ by taking $\mathbf{S}=\mathbf{B}(\tau_{Z})$, where $\tau_{Z}$ is the topology on $Z$. Now $(Z,\mathbf{S})$ is a measure space and $\prec$ is analytic on $Z\times Z$ and well-founded. Using Lemma 4.12, Claim 6 is valid.

$\mathbf{Claim\ 7:}$ $\forall\alpha<\omega_1,\forall(n,j,\dn s)\in Z$, if $ s\in S_\alpha$, then $\rho_{\prec}(n,j,\dn s)\geq\alpha$.

The initial case $\alpha=0$ is immediate.

Suppose the condition holds when $0<\alpha<\omega_{1}$ is not a limit ordinal. $\forall(n,j,\dn s)\in Z$ with $s\in S_{\alpha+1}$,
$$\exists t\in S_\alpha\cap U_{n+1},\ s\leq t.
$$
Since $S_{\alpha}\subseteq S_{0}=\bigcup\limits_{j\in\mathbb{N}}{\rm Min}K_{j}$, we can choose $k$ such that $t\in\operatorname{Min}K_k$.
Then $(n+1,k,\dn t)\prec(n,j,\dn s)$ and hence $\rho_{\prec}(n,j,\dn s)\geq\rho_{\prec}(n+1,k,\dn t)+1\geq\alpha+1.$ This shows the successor step.

Suppose $\alpha$ is not a nonzero limit ordinal and the statement hold for all $\beta<\alpha$. Choose any $(n,j,\dn s)\in Z$ and $s\in S_{\alpha}$. Then $s\in\bigcap\limits_{\beta<\alpha}S_{\beta}$ and by assumption
$\rho_{\prec}(n,j,\dn s)\geq\beta$. Hence we obtain $$\alpha=\bigvee\limits_{\beta<\alpha}\beta\leq\rho_{\prec}(n,j,\dn s).$$

Therefore, Claim 7 holds for all countable ordinals.

By Claim 3, we take $\gamma$ the ordinal number presented in Claim 6 and get $cl(S_\gamma)_{\Sigma F}=F$. As $U_0\neq\varnothing$, there exists $s\in S_\gamma\cap U_0$. Choose a label $j$ such that $s\in\operatorname{Min}K_j$. This label $j$ must exist as $S_\gamma\subseteq S_{0}$. By Claim 7 and Claim 6, $\gamma\leq\rho(0,j,\dn s)<\gamma$, a contradiction.

Therefore, the assumption $\bigcap\limits_{n\in \mathbb{N}}=\emptyset$ is impossible.

Now we have $\bigcap\limits_{n\in\mathbb{N}} U_{n}\neq\emptyset$ and choose any  $p\in\bigcap\limits_{n\in\mathbb{N}} U_{n}$. Then $p$ belongs to each nonempty open set in $\Sigma F$. We deduce that $p$ is the largest element in $F$. Consequently, $$F=\dn_{F} p=\dn_{P}p=cl(\{p\})_{\Sigma P}.$$ Hence $\Sigma P$ is sober.
\end{proof}


\section*{References}
\begin{thebibliography}{s9}

\bibitem{IWJ81} D. A. Drake, W. J. Thron, On the representation of of an abstract lattice as the family of closed sets of a topological space, Transactions of the American Mathematical Society, 120 (1965), 57-71.

\bibitem{R2226} M. Ern\'{e}, Infinite distributive  laws  versus  local  connectedness  and  compactness  properties, Topology and its Applications, 156 (2009) 2054-2069.

\bibitem{RE77} R. Engelking, General Topology, Polish Scientific Publishers, Warszawa, 1977.



\bibitem{GG03} G. Gierz, K. H. Hofmann, K. Keimel, J. D. Lawson, M. W. Mislove and D. S. Scott, Continuous Lattices and Domains, Encyclopedia of Mathematics and its Applications, vol. 93, Cambridge University Press, Cambridge, 2003.


\bibitem{RBN61} Z. He, The sobriety of Fr\'{e}chet-Urysohn  spaces and Scott topology of the lattice of open sets, Houston Journal of Mathematics, 52(1)(2026) 195-212.



\bibitem{RFRY6} Z. He, Applications of sequential spaces to Smyth power spaces and sober spaces, 	arXiv:2609.27775, 2026.

 \bibitem{RTU66} Z. He, Z. Dong, K. Wang, Sobriety of Scott topologies under countability conditions, arXiv:2609.24001, 2026.


\bibitem{EWWF31} A. S. Kechris, Classical Descreptive Set Theory, Graduate Texts in Mathematics vol. 156, Springer, New York, 1995.

\bibitem{AT45} J. D. Lawson, G. Wu, X. Xi, Well-filtered spaces, compactness, and the lower topology, Houston Journal of Mathematics, 46(1) (2020) 283-294.

\bibitem{AB75} H. Miao, X. Xi, Q. Li and D. Zhao, Not every countable complete distributive lattice is sober, Mathematical Structures in Computer Science, 33 (2023) 809-831.


\bibitem{MS36} M. H. Stone, The theory of representations for Boolean algebras, Transactions of the American Mathematical Society, 40 (1936) 37-111.

\bibitem{EEF32} X. Xi, C. Shen, D. Zhao, The complete Boolean algebra of regular open subsets in the real line is not sober, https://arxiv.org/abs/2608.00408, 2026.

\bibitem{EFUP62} X. Xu, W. Ji, Every countable meet-continuous lattice is Scott sober, 	arXiv:2609.19612, 2026.

\bibitem{TH62} X. Xu, C. Shen, X. Xi, D. Zhao, First countability, $\omega$-well-filtered spaces and reflections, Topology and its Applications, 279 (2020) 107255.

\bibitem{U17} X. Xu, D. Zhao, Some open problems on well-filtered spaces and sober spaces, Topology and its Applications, 301 (2021) 107540.






\end{thebibliography}
\end{document}